\documentclass{amsproc}
\usepackage{amsmath,amsfonts}

\newtheorem{thm}{Theorem}[section]
\newtheorem{lem}[thm]{Lemma}
\newtheorem{cor}[thm]{Corollary}
\newtheorem{prop}[thm]{Proposition}

\begin{document}

\def\l{\lambda}
\def\t{\theta}
\def\T{\Theta}
\def\m{\mu}
\def\a{\alpha}
\def\b{\beta}
\def\g{\gamma}
\def\o{\omega}
\def\p{\varphi}
\def\D{\Delta}
\def\O{\Omega}
\def\G{\Gamma}

\def\N{{\mathbb N}}
\def\C{{\mathbb C}}
\def\Z{{\mathbb Z}}
\def\R{{\mathbb R}}
\def\P{{\mathbb P}}
\def\E{{\mathbb E}}
\def\K{{\mathbb K}}

\def\L{{\mathcal L}}
\def\D{{\mathcal D}}
\def\I{{\mathcal I}}
\def\T{{\mathcal T}}
\def\M{{\mathcal M}}

\title{Interlaced processes on the circle}

\author{Anthony P. Metcalfe}
\address{Department of Mathematics, University College Cork, Ireland.}
\email{am12@cs.ucc.ie}
\author{Neil O'Connell}
\address{Mathematics Institute, University of Warwick, Coventry CV4 7AL, UK.}
\email{n.m.o-connell@warwick.ac.uk}
\author{Jon Warren}
\address{Department of Statistics, University of Warwick, Coventry CV4 7AL, UK.}
\email{j.warren@warwick.ac.uk}
\subjclass{Primary 60J99, 60B15, 82B21; Secondary 05E10}

\begin{abstract}
When two Markov operators commute, it suggests that we can couple
two copies of one of the corresponding processes. 
We explicitly construct a number of couplings of this type for a commuting
family of Markov processes on the set of conjugacy classes of the unitary group, 
using a dynamical rule inspired by the RSK algorithm.
Our motivation for doing this is to develop a parallel programme,
on the circle, to some recently discovered connections in random matrix theory
between reflected and conditioned systems of particles on the line.
One of the Markov chains we consider gives rise to a family 
of Gibbs measures on `bead configurations' on the infinite cylinder. 
We show that these measures have determinantal 
structure and compute the corresponding space-time correlation kernel.
\end{abstract}

\maketitle

\section{Introduction}

When two Markov operators commute, it suggests that we can couple
two copies of one of the corresponding processes.
Such couplings have
been described in~\cite{df} in a general context. 
In this paper, we explicitly construct a number of couplings of this type for a commuting
family of Markov processes on the set of conjugacy classes of the unitary group, 
using a dynamical rule inspired by the Robinson-Schensted-Knuth (RSK) algorithm.
Our motivation for doing this is to develop a parallel programme,
on the circle, to some recently discovered connections in random matrix theory
between reflected and conditioned systems of particles on the line
(see, for example, \cite{bar01,bbo05,gtw01,oy02,war06}).
The RSK algorithm is a combinatorial device which plays an important role in the 
representation theory of $GL(n,\C)$ and lies at the heart of these developments.  
We refer the reader to~\cite{fulton} for more background on the combinatorics.  
One of the Markov chains we consider gives rise to a family 
of Gibbs measures on `bead configurations' on the infinite cylinder. 
This is related to recent work~\cite{bou06,bt07,kos06}
on planar and toroidal models. We will show that these measures have determinantal 
structure and compute the corresponding space-time correlation kernel.

We start with some motivation, and a flavour of some of the main results in this paper.
The following construction is closely related to the RSK algorithm~\cite{oy02,noc03,jphys}.  
Let $E_1=\Z$ and, for $n\ge 2$,
$$E_n=\{x\in\Z^n:\ x_1<\cdots<x_n\}.$$
The reader should think of an element $x\in E_n$ as a configuration of $n$ particles
on the integers located at positions $x_1<\cdots <x_n$.
We say that a pair $(x,y)\in E_m\times E_{m+1}$ are {\em interlaced},
and write $x\preceq y$, if $y_j<x_j\le y_{j+1}$ for all $j\le m$.
A (discrete) Gelfand-Tsetlin pattern of depth $n$ is a collection $(x^1,x^2,\ldots,x^n)$
such that $x^m\in E_m$ for $m\le n$ and $x^m\preceq x^{m+1}$ for $1\le m<n$.

Fix $n\ge 2$ and denote by $GT_n$ the set of Gelfand-Tsetlin patterns of depth $n$.
Let $w_1,w_2,\ldots$ be a sequence of independent random variables, each chosen 
according to the uniform distribution on $\{1,2,\ldots,n\}$.  Using these, we will construct 
a Markov chain $(X(k),k\ge 0)$ with state space $GT_n$, which evolves according to
$$X(k+1)=g(X(k),w_{k+1}),\qquad k\ge 0$$
where $g:GT_n\times\{1,\ldots,n\}\to GT_n$ is defined recursively as follows.
Fix $m<n$ and let $(x,y)\in E_m\times E_{m+1}$ such that $x\preceq y$.
Let $x'\in E_m$ such that, for some $j\le m$, $x'_i=x_i+\delta_{ij}$. 
The reader should have in mind $m$ particles located at positions $x_1<\cdots<x_m$,
interlaced with another set of $m+1$ particles located at positions $y_1<\cdots<y_{m+1}$.
The first configuration $x$ is updated by moving the particle at position $x_j$ one step to the right,
that is, to position $x_j+1$, giving a new configuration $x'$.  This can be used to obtain an 
update $y'$ to the second configuration $y$, obtained by moving the first available particle, strictly
to the right of position $y_j$, which can be moved {\em without breaking the interlacing constraint}, 
so that $x'\preceq y'$.  Such a particle is guaranteed to exist because the interlacing constraint 
cannot be broken by the rightmost particle. In other words, the updated configuration is given 
by $y'_i=y_i+\delta_{ik}$ where $k=\inf\{l>j:\ y_l+1<x_l\}$.  Let us write $y'=\phi(x,y,x')$
where $\phi$ is defined on an appropriate domain.  Now, given $m\le n$ and a pattern
$(x^1,\ldots,x^n)\in GT_n$ we define a new pattern 
$$(y^1,\ldots,y^n) \equiv g((x^1,\ldots,x^n),m)$$ 
as follows.
First we set $y^j=x^j$ for $j<m$. Then we obtain $y^m$ from the configuration $x^m$ by 
moving the first available particle, starting from the particle at position $x^m_1$, which 
can be moved one step to the right without breaking the interlacing constraint. Finally,
we define, for $m\le l < n$, $y^{l+1}=\phi(x^l,x^{l+1},y^l)$.

The Markov chain $X$ has remarkable properties.  For example, if we start from
the initial pattern 
\begin{equation}\label{null}
X(0)=((0),(-1,0),(-2,-1,0),\ldots,(-n+1,\ldots,0)),
\end{equation}
then $(X^n(k),k\ge 0)$ is a Markov chain (with respect to its own filtration)
with state space $E_n$ and transition probabilities given by
\begin{equation}
P_n(x,y)=\begin{cases} \frac1n\frac{h(y)}{h(x)} & x\nearrow y\\
0 & \mbox{otherwise} \end{cases}
\end{equation}
where $h$ is the Vandermonde function $h(x)=\prod_{i<j\le n} (x_j-x_i)$
and the notation $x\nearrow y$ means that the configuration $y$ can be obtained from 
$x$ by moving one particle one step to the right.  The fact that $P_n$ is a Markov
transition kernel follows from the fact (see, for example,~\cite{kor02}) that $h$ is
positive on $E_n$ and satisfies
$$\frac1n\sum_{x\nearrow y} h(y) = h(x).$$
More generally (see, for example,~\cite{noc03,jphys}):
\begin{prop}\label{rsk} 
Given $x\in E_n$, if $X(0)$ is
chosen uniformly at random from the set of patterns $(x^1,\ldots,x^n)$ with $x^n=x$,
then $(X^n(k),k\ge 0)$ is a Markov chain started from $x$ with transition probabilities
given by $P_n$.  Moreover, for each $T>0$, the conditional law of $X(T)$, given 
$(X^n(k),T\ge k\ge 0)$, is uniformly distributed on the set of patterns $(x^1,\ldots,x^n)$ 
with $x^n=X^n(T)$.
\end{prop}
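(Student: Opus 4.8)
The plan is to deduce the whole proposition from one purely combinatorial fact. Writing $\pi(x^1,\dots,x^n)=x^n$ for the top-row map, the claim is that \emph{for each $x\in E_n$ the map $(p,m)\mapsto g(p,m)$ is a bijection from $\{p\in GT_n:\pi(p)=x\}\times\{1,\dots,n\}$ onto $\{q\in GT_n:\,x\nearrow\pi(q)\}$.} Granting this, the rest is a short probabilistic argument. Put $N(y)=\#\{q\in GT_n:\pi(q)=y\}$; by the classical count of Gelfand--Tsetlin patterns with prescribed top row (equivalently, the Weyl dimension formula), $N(y)=h(y)/\prod_{i<j\le n}(j-i)$, so $N$ is a fixed, $x$-independent multiple of $h$, and the harmonicity $\frac1n\sum_{x\nearrow y}h(y)=h(x)$ gives $\sum_{y:\,x\nearrow y}N(y)=nN(x)$; in particular the two sides of the bijection do have equal cardinality, and for $x\nearrow y$
$$\frac{N(y)}{\sum_{y':\,x\nearrow y'}N(y')}=\frac1n\frac{h(y)}{h(x)}=P_n(x,y).$$

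I would then argue by induction on $T$ that, conditionally on $(X^n(k),0\le k\le T)$, the variable $X(T)$ is uniformly distributed on $\pi^{-1}(X^n(T))$; the case $T=0$ is the hypothesis. For the inductive step, suppose this holds at time $T$ and put $x_T=X^n(T)$. Since $w_{T+1}$ is independent of the past and uniform on $\{1,\dots,n\}$, the pair $(X(T),w_{T+1})$ is conditionally uniform on $\pi^{-1}(x_T)\times\{1,\dots,n\}$, so by the bijection $X(T+1)=g(X(T),w_{T+1})$ is conditionally uniform on $\{q:\,x_T\nearrow\pi(q)\}$. Splitting this uniform law according to the value $y=\pi(X(T+1))=X^n(T+1)$ and using the displayed identity shows that $\P\big(X^n(T+1)=y\mid X^n(k),\,k\le T\big)=P_n(x_T,y)$ and that, given in addition $X^n(T+1)=y$, the variable $X(T+1)$ is uniform on $\pi^{-1}(y)$. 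This both advances the induction and shows that the conditional law of $X^n(T+1)$ given $(X^n(k),k\le T)$ depends on the past only through $x_T$ and equals $P_n(x_T,\cdot)$; taking $T=0$, $(X^n(k),k\ge0)$ is a Markov chain in its own filtration, started from $x$, with transition kernel $P_n$. Both assertions of the proposition are now in hand.

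What remains, and where the real work is, is the bijection. That the map sends $\{p:\pi(p)=x\}\times\{1,\dots,n\}$ into the stated set is easy: in forming $g(p,m)$ the rows below level $m$ are frozen, while each of the rows $m,m+1,\dots,n$ has exactly one particle displaced one step to the right --- at level $m$ by the explicit prescription (a particle can always be moved, the rightmost one being unobstructed from below), and at each higher level by an application of $\phi$, which never fails because the rightmost particle cannot obstruct the interlacing, as observed just before the proposition --- so $\pi(g(p,m))$ is obtained from $x$ by a single rightward move, i.e.\ $x\nearrow\pi(g(p,m))$. Since the two sides have equal cardinality it then suffices to prove injectivity, and this amounts to running the dynamics backwards: one must recover $m$ and $p$ from $p'=g(p,m)$, the key points being that for fixed interlaced $x\preceq y$ the map $x'\mapsto\phi(x,y,x')$ is injective --- the index $k=\inf\{l>j:\,y_l+1<x'_l\}$ that it outputs, together with $j$, is recoverable from the output --- and that the level-$m$ move is reversible given the frozen row beneath it. I would organise this as an induction on the depth $n$, using that the uniform measure on $\pi^{-1}(x)$ is the same as first choosing row $n-1$ among $\{z:z\preceq x\}$ with probability proportional to $h(z)$ and then filling in the remaining depth-$(n-1)$ pattern uniformly with top row $z$; this reduces the depth-$n$ statement to the depth-$(n-1)$ statement together with a single-level assertion about $\phi$ and the $h$-harmonic interlacing kernel. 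The main obstacle is precisely this single-level invertibility of $\phi$: one has to unpack the ``first available particle'' rule carefully enough to see that $g$ loses no information about $(p,m)$. Everything else is bookkeeping with the Vandermonde function $h$ and the dimension formula.
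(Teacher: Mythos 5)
The paper itself supplies no proof of this proposition: it is stated with a citation to~\cite{noc03,jphys}. The closest internal analogue to a proof is that of Proposition~\ref{push} (and its variants), where the key point is that the one-step update map on the fibre is a measure-preserving bijection, which is then fed into an induction on the number of time steps using the commutation $q_sq_r=q_rq_s$. Your plan is the exact combinatorial counterpart: reduce everything to the claim that, for fixed $x$, the map $(p,m)\mapsto g(p,m)$ is a bijection from $\pi^{-1}(x)\times\{1,\dots,n\}$ onto $\{q:x\nearrow\pi(q)\}$, and then do a clean induction on $T$. The surrounding probabilistic deductions you give are all correct: the cardinality count $N(y)=h(y)/\prod_{i<j\le n}(j-i)$ together with the harmonic identity $\tfrac1n\sum_{x\nearrow y}h(y)=h(x)$ shows the two sides of the bijection have matching size and yields the right transition probabilities, and the one-step propagation of uniformity on fibres gives simultaneously the Markov property of $X^n$ and the conditional law of the full pattern.

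That said, there is a genuine gap, and you acknowledge it: the bijection is asserted, not proved, and it is the whole content of the proposition. The ``into'' direction and the dimension count are fine, so what is missing is injectivity, equivalently an explicit inverse algorithm. Your sketch names the two ingredients (invertibility of the level-$m$ move given the frozen row beneath, and injectivity of $x'\mapsto\phi(x,y,x')$), but the actual difficulty is how, given only $q=g(p,m)$ and $x$, one identifies the level $m$ at which the cascade started and then peels $p$ off row by row, checking that the first-available-particle rule at level $m$ and the $\phi$-rules at levels above $m$ fit together into one consistent inverse rather than producing several preimages. The reduction by induction on depth that you gesture at (choosing row $n-1$ with weight $h$, then filling in a depth-$(n-1)$ pattern uniformly) is a sensible way to organise this, but it is not carried out, and until the inverse reconstruction is written down the argument is an outline rather than a proof.
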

The connection with the RSK algorithm is the following: 
if we start from the initial pattern (\ref{null}), then the integer partition 
$$(X_n^n(k),X^n_{n-1}(k)+1,X^n_{n-2}(k)+2,\ldots,X^n_{1}(k)+n-1)$$
is precisely the shape of the tableau obtained when one applies the RSK algorithm,
with column insertion, to the word $w_1w_2\cdots w_k$.  We refer to~\cite{noc03}
for more details.  This construction clearly has a nested structure.  If we consider
the evolution of the last two rows $X^{n-1}$ and $X^n$ we see that we can construct
a Markov chain with transition probabilities $P_n$ from a Markov chain with transition
probabilities $P_{n-1}$ plus a `little bit of extra randomness'. 

In the above construction we are thinking in terms of particles moving on a line.  
In random matrix theory, there are often strong parallels between natural measures
on configurations of particles (or `eigenvalues') on the line, and configurations of
particles on the circle.  It is therefore natural to ask if there is an analogue of the above 
construction for configurations of particles on the circle.  The notion of interlacing carries 
over in the obvious way.  However, in this setting, interlaced configurations should have 
the {\em same} number of particles, and the analogue of a Gelfand-Tsetlin pattern
could potentially be an infinite object. Despite this fundamental difference between
the two settings, there is indeed a natural analogue of the above `RSK dynamics' 
and a natural analogue of Proposition~\ref{rsk}.  Consider the discrete circle with
$N$ positions which we label $\{0,1,\ldots,N-1\}$ in the anti-clockwise direction.
The analogue of the Markov chain with transition matrix $P_n$ is a Markov
chain on the set $C_n^N$ of configurations of $n$ indistinguishable particles on 
the discrete circle with transition probabilities given by
\begin{equation}
Q(x,y)=\begin{cases} \frac cn\frac{\Delta(y)}{\Delta(x)} & x\nearrow y\\
0 & \mbox{otherwise} \end{cases}
\end{equation}
where, similarly as before, the notation $x\nearrow y$ means that the configuration 
$y$ can be obtained from $x$ by moving one particle one step anti-clockwise,
and the function $\Delta$ is again a Vandermonde function defined, for a configuration
$x$ which consists of a particles located at positions $k_1,\ldots,k_n$, by 
\begin{equation}\label{pf}
\Delta(x)=\prod_{i<j\le n} |e^{2\pi i k_j/N}-e^{2\pi i k_i/N}|.
\end{equation}
The constant $c$ is chosen so that $Q(x,\cdot)$ is a probability distribution;
the fact that $c$ can be chosen independently of $x$ follows from the fact
(see, for example,~\cite{kor02}) that $\Delta$ is a positive eigenvector of the
matrix $1_{x\nearrow y}$, that is,
$$\sum_{x\nearrow y} \Delta(y)=\lambda \Delta(x),$$
for some $\lambda>0$.  A formula for the eigenvalue $\lambda$ can be found in~\cite{kor02}.

In this setting we will say that a pair of configurations $(x,y)$ are interlaced, 
and write $x\preceq y$ as before, if there is a labelling of the particles
such that $x$ consists of a particles located at positions $k_1,\ldots,k_n$,
$y$ consists of a particles located at positions $l_1,\ldots,l_n$,
$k_j<l_j\le k_{j+1}$ for $j<n$ and either $k_n<l_n\le k_1+N$ or $k_n<l_n+N\le k_1+N$.

Let $(X(k),k\ge 0)$ be a Markov chain with state space $C_n^N$ and transition matrix $Q$.
On the same probability space, without using any extra randomness, we can construct a 
second process $(Y(k),k\ge 0)$, also taking values in $C_n^N$, such that $X(k)\preceq Y(k)$ 
for all $k$.  This is given as follows.  For each $k>0$, given $X(k)$, $Y(k)$ and $X(k+1)$
we obtain the configuration $Y(k+1)$ from $Y(k)$ by moving a particle one step anti-clockwise;
this particle is chosen as follows. The transition from $X(k)$ to $X(k+1)$ involves one
particle moving one step anti-clockwise; starting at the position of this particle, choose
the first particle in the configuration $Y(k)$ that we come to, in an anti-clockwise direction,
which can be moved by one step anti-clockwise without breaking the interlacing constraint.

The function $\Delta$ defined by (\ref{pf}) is also a positive (left and right) eigenvector of the 
matrix $1_{x\preceq y}$.  This follows, for example, from the discussion given at the end of Section 3.
In particular,
$$\sum_{x\preceq y} \Delta(x) = \gamma \Delta(y)$$
for some $\gamma>0$ and we can define a Markov kernel on $C_n^N$ by
\begin{equation}\label{m}
M(y,x)=\frac1\gamma\frac{\Delta(x)}{\Delta(y)}1_{x\preceq y}.
\end{equation}
The analogue of Proposition~\ref{rsk} in this setting is the following:
\begin{prop}\label{crsk}
If $Y(0)=y$ and $X(0)$ is chosen at random according to the distribution $M(y,\cdot)$, then
$(Y(k),k\ge 0)$ is a Markov chain started at $y$ with transition matrix $Q$.  Moreover, for
each $T>0$, the conditional law of $X(T)$, given $(Y(k),T\ge k\ge 0)$, is given by $M(Y(T),\cdot)$.
\end{prop}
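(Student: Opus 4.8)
The plan is to prove Proposition~\ref{crsk} by the standard ``intertwining'' argument (as in~\cite{df}), reducing it to a single algebraic identity between $Q$, $M$ and the deterministic coupling map. Let me write $p(x,x') = Q(x,x')$ for the one-step transition of the bottom chain, and for a pair $x \preceq y$ let $\psi(x,y,x')$ denote the deterministic rule that produces $Y(k+1)$ from $X(k)=x$, $Y(k)=y$, $X(k+1)=x'$ (i.e.\ starting at the particle of $y$ that just moved in $x$, advance the first particle of $y$ that can legally move anti-clockwise). The joint process $(X(k),Y(k))$ is Markov on $\{(x,y): x \preceq y\}$ with transition kernel
\begin{equation}\label{joint}
\widehat Q\bigl((x,y),(x',y')\bigr) = Q(x,x')\,1_{\{y' = \psi(x,y,x')\}}.
\end{equation}
First I would check the ``initialisation'' identity: if $(X(0),Y(0))$ has law $\nu(y)M(y,x)$ for a given $y$ (so $Y(0)=y$ is fixed, $X(0)\sim M(y,\cdot)$), then after one step the marginal of $Y(1)$ is $Q(y,\cdot)$ and, conditionally on $Y(1)=y'$, $X(1)\sim M(y',\cdot)$. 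Concretely, this is the statement that for all $y,y',x'$,
\begin{equation}\label{key}
\sum_{x\,:\,x\preceq y}\ M(y,x)\,Q(x,x')\,1_{\{y'=\psi(x,y,x')\}} \;=\; Q(y,y')\,M(y',x').
\end{equation}
Granting~\eqref{key}, the full proposition follows by induction on $T$: the conditional law of $X(T+1)$ given $(Y(k),\,T+1\ge k\ge 0)$ is computed by applying~\eqref{key} with the inductive hypothesis that $X(T)\sim M(Y(T),\cdot)$ given the $Y$-history — the Markov property of $Y$ under $Q$ and the intertwining are extracted simultaneously in the usual way. So the entire content is~\eqref{key}.

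To prove~\eqref{key} I would argue combinatorially. Plug in $M(y,x) = \gamma^{-1}\Delta(x)/\Delta(y)\,1_{x\preceq y}$ and $Q(x,x') = (c/n)\,\Delta(x')/\Delta(x)\,1_{x\nearrow x'}$; the $\Delta(x)$ factors cancel, so the left side of~\eqref{key} equals $\frac{c}{n\gamma}\,\frac{1}{\Delta(y)}\sum_{x} \Delta(x')\,1_{\{x\preceq y,\ x\nearrow x',\ y'=\psi(x,y,x')\}}$, and the right side equals $\frac{c}{n\gamma}\,\frac{\Delta(y')}{\Delta(y)^2}\sum_{x'':x''\preceq y} \Delta(x'')\,1_{\{x''\nearrow y'\? \}}$ — wait, more carefully, $Q(y,y')M(y',x') = \frac{c}{n}\frac{\Delta(y')}{\Delta(y)}1_{y\nearrow y'}\cdot\frac1\gamma\frac{\Delta(x')}{\Delta(y')}1_{x'\preceq y'}= \frac{c}{n\gamma}\frac{\Delta(x')}{\Delta(y)}1_{\{y\nearrow y',\ x'\preceq y'\}}$. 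So after cancelling the common prefactor $\frac{c}{n\gamma}\frac{\Delta(x')}{\Delta(y)}$, identity~\eqref{key} reduces to the purely set-theoretic claim
\begin{equation}\label{combin}
\#\{x : x\preceq y,\ x\nearrow x',\ \psi(x,y,x') = y'\} \;=\; 1_{\{y\nearrow y',\ x'\preceq y'\}}.
\end{equation}
That is: given $y' $ obtained from $y$ by one anti-clockwise step and given $x'\preceq y'$, there is exactly one pair $(x,$ move$)$ with $x\preceq y$, $x\nearrow x'$, and whose image under the coupling rule is $y'$; and if $y\not\nearrow y'$ or $x'\not\preceq y'$ there are none. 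The ``$\le 1$'' and ``$\ge 1$'' directions are proved by explicitly inverting the deterministic rule $\psi$: the particle of $x$ that must move is the (unique) one whose forward step in $x$ triggered, via the ``first available particle anti-clockwise'' mechanism, the particular particle of $y$ that moved to give $y'$ — one recovers $x$ from $x'$ by moving that same $x'$-particle back, and checks $x\preceq y$ using $x'\preceq y'$ and $y\nearrow y'$.

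The main obstacle is precisely establishing~\eqref{combin}, i.e.\ that the RSK-type coupling map $\psi(\cdot,y,\cdot)$, for fixed $y$, restricts to a \emph{bijection} between $\{(x,x'): x\preceq y,\ x\nearrow x'\}$ and $\{(x',y'): y\nearrow y',\ x'\preceq y'\}$ (with $x$ recoverable from $(x',y')$). The surjectivity/injectivity bookkeeping is where the circular geometry genuinely differs from the line: one must handle the ``wrap-around'' cases in the definition of $\preceq$ (the clauses $k_n < l_n \le k_1+N$ versus $k_n < l_n+N \le k_1+N$) and verify that the ``first particle we come to going anti-clockwise that can legally move'' is well-defined and invertible even when the triggering cascade passes through position $0$. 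Once~\eqref{combin} is in hand, I would assemble the induction in one short paragraph: define the joint kernel~\eqref{joint}, verify by~\eqref{key} that $M$ intertwines $\widehat Q$ and $Q$ in the sense $M \widehat Q^{\,\mathrm{marg}} = Q M$ appropriately, and conclude both the Markov property of $(Y(k))$ under $Q$ and the conditional law $X(T)\sim M(Y(T),\cdot)$ by the general lemma on Markov functions (the argument of~\cite{df}, or Rogers--Pitman).
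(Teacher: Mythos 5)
Your reduction is correct and follows the same logical skeleton as the paper's proof of the corresponding lifted statement (Proposition~3.3, labelled \texttt{push}): both arguments are inductions on the time index, propagating the joint law of $(Y(0),\ldots,Y(T),X(T))$ using a one-step intertwining, and in both cases the entire content is concentrated in a bijectivity statement for the deterministic coupling map. Your identity~(2) is precisely the one-step version of the paper's induction, and your equation~(3) is the right combinatorial reformulation. The assembly via Rogers--Pitman/Diaconis--Fill is exactly what the paper does implicitly in the inductive step.

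However, there is a genuine gap exactly where you flag it: you never prove~(3), the assertion that for fixed $y$, the map $(x,x')\mapsto(x',\psi(x,y,x'))$ is a bijection between $\{(x,x'):x\preceq y,\,x\nearrow x'\}$ and $\{(x',y'):y\nearrow y',\,x'\preceq y'\}$. You correctly identify the wrap-around cases on the circle as the technical irritant, but you only gesture at ``explicitly inverting the deterministic rule,'' which is precisely what needs to be done carefully. The paper's route around this is instructive and worth contrasting with yours. Rather than working on the circle $C_n^N$ and tracking the ``first available particle anti-clockwise'' rule through the seam at position $0$, the paper lifts everything to the alcove $A_n=\{\t\in\R^n:\t_1\le\cdots\le\t_n\le\t_1+2\pi\}$ (respectively its lattice version $A_n^N$), where the interlacing relations become honest coordinatewise inequalities with no wrap-around. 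It then writes an \emph{explicit closed-form} coupling map
\[
\phi_{u,v}(x)_i=\min(u_{i+1},v_i)+\max(u_i,v_{i-1})-x_i,
\]
for which measure-preservation (and hence, in the discrete case, bijectivity) is a one-line check: it is a componentwise reflection, hence an isometry of the polygon $\tau_{u,v}=\{x:u\preceq_s x\preceq_r v\}$ onto $\tau'_{u,v}=\{x:u\preceq_r x\preceq_s v\}$. Note also that the paper's bijection is parametrised differently from yours: the paper fixes the ``bottom'' and ``top'' corners $u,v$ of the interlacing diamond and bijects the two middle configurations, whereas you fix $y$ and $x'$ (one middle corner and one end) and biject the other two. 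These are equivalent after re-indexing, but the paper's choice is the one that makes the isometry formula possible; with your parametrisation the map does not have a comparably clean algebraic description, which is why you would be forced into the case-by-case cascade analysis you anticipate. If you want to keep your framing, you should at minimum lift the circular configurations to $A_n^N$ before attempting the bijection proof, and ideally replace the verbal ``first available particle'' rule by the explicit reflection formula, verifying separately that the two descriptions agree; as written, the bijection claim is asserted but not established, and that claim carries the whole proof.
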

In the sequel we will present a number of variations of this result, firstly involving random
walks with jumps in a continuous state space and secondly involving Brownian motion.
Proposition~\ref{crsk} follows from results presented in Section 3 (see discussion towards
the end of that section).

We will also study continuous analogues of the Markov chain with transition matrix $M$.
These Markov chains also arise naturally in the context of a certain random walk on the 
unitary group which is obtained by taking products of certain random (complex) reflections, 
as studied for example in~\cite{fr06,porod2}.  This is described in Section 2 and taken as
a starting point for the exposition that follows.  The main point is that these Markov
chains commute with each other and with the Dirichlet Laplacian on the set of conjugacy
classes of the unitary group.  In Sections 3 and~4, we present couplings which realise these 
commutation relations, first between interlaced random walks on the circle and later between 
interlaced Brownian motions.  These couplings are precisely the variations of Proposition~\ref{crsk}
mentioned above.  Actually there are two natural couplings for the random walks and these 
correspond to dynamical rules inspired by the RSK algorithm with row, and column insertion, 
respectively.  The couplings between interlaced Brownian motions can be thought of as a 
limiting case where the two types of coupling become equivalent. In Section 5, we consider a 
family of Markov chains which can be thought of as perurbations of a continuous analogue 
of the Markov chain with transition matrix given by~(\ref{m}).  These give rise to a natural 
family of Gibbs measures 
on `bead configurations' on the infinite cylinder. This is a cylindrical analogue of the planar bead 
model studied in [5] and, in one special case (the `unperturbed' case), can also be 
regarded as a cylindrical analogue of some natural measures on Gelfand-Tsetlin patterns 
related to `GUE minors'~\cite{bar01,jn,or,fn} (see also~\cite{manon} for extensions to the other 
classical complex Lie algebras).  We show that these measures have determinantal
structure by first writing the restrictions of these measures to cylinder sets
as products of determinants and then following the methodology of Johansson
(see, for example,~\cite{j}) to compute the space-time correlation functions.  

\section{Markov processes on the conjugacy classes of the unitary group}

Consider the group $U(n)$ of $n\times n$ unitary matrices, and denote by $C_n$ 
the set of conjugacy classes in $U(n)$. Each element of $C_n$ can be identified
with an unlabelled configuration of $n$ points (eigenvalues) on the unit circle, which in turn
can be identified with the Euclidean set $D_n=\{\t\in\R^n:\ 0\le \t_1\le \cdots\le \t_n<2\pi\}$.
Denote by $dx$ the image of Lebesgue measure under the latter identification,
and by $\mu$ the probability measure on $C_n$ induced from Haar measure on $U(n)$.
Then $\mu(dx)=(2\pi)^{-n}\Delta(x)^2dx$, where 
$\Delta(x)$ is defined, for $x=\{e^{i\t_1},\ldots,e^{i\t_n}\}$, by
\begin{equation}\label{Delta}
\Delta(x)=\prod_{1\le l<m\le n} | e^{i\t_l}-e^{i\t_m}|.
\end{equation}
The irreducible characters $\chi_\l$ of $U(n)$ are indexed by the set 
$$\O_n=\{\l\in\Z^n:\ \l_1\ge\l_2\ge\cdots \ge\l_n\}.$$
We assume that these are normalised, that is, $\int |\chi_\l|^2d\mu=1$ for all $\l\in\O_n$. 

Let $x\in C_n$ and consider the random walk on $U(n)$ which is constructed
by multiplying together independent, randomly chosen elements from the conjugacy 
class $x$. (By `a randomly chosen element from the conjugacy class $x$' we mean
a random element of the conjugacy class $x$ which admits a representation of the form 
$MD_xM^*$, where $D_x$ is an arbitrary element of the conjugacy class $x$ and 
$M$ is a Haar-distributed, randomly chosen element of $U(n)$.) 
The corresponding Markov kernel $p_x(y,dz)$ can be 
interpreted as the law of $D_xMD_yM^*$, where $D_x$ and $D_y$ are arbitrary elements of 
the conjugacy classes $x$ and $y$, respectively, and $M$ is a Haar-distributed randomly
chosen element of $U(n)$.
Write $p_xf=\int p(\cdot,dz)f(z)$ for $f\in L_2(C_n,\mu)$.
Then, for each $\l\in\O_n$,
\begin{equation}\label{efun}
p_x \chi_\l =\frac{\chi_\l(x)}{d_\l} \chi_\l ,
\end{equation}
where $d_\l$ is the dimension of the representation corresponding to $\l$
(see, for example,~\cite[Proposition 6.5.2]{faraut}).
Moreover, $\mu$ is an invariant measure for the Markov kernel
$p_x$, that is,
\begin{equation}
 \int \mu(dy)p_x(y,\cdot ) = \mu.
\end{equation}

The Markov kernels $\{p_x,\ x\in C_n\}$ are the extreme points in the convex set 
$\M$ of all Markov kernels on $L_2(C_n,\mu)$ with the irreducible characters as 
eigenfunctions. (See, for example,~\cite{bh}).) All of the Markov kernels in $\M$ have
$\mu$ as an invariant measure and, as operators on $L_2(C_n,\mu)$, they commute.
They correspond to random walks on $U(n)$ such that the law of the increments is 
invariant under conjugation. Such random walks
on $U(n)$, and other classical compact groups, have been studied extensively
in the literature (see, for example,~\cite{ds,diaconis,rosenthal,porod1,porod2}).

The case of interest in this paper is the random walk obtained by taking products of 
certain random (complex) reflections in $U(n)$ (see, for example,~\cite{fr06,porod2}). 
More precisely, we take $x=\{e^{ir},1,1,\ldots,1\}$ in the above kernel, where $r\in(0,2\pi)$.  
Let us write 
\begin{equation}\label{pr-def}
p_r:=p_x
\end{equation} for this case.  A concrete description of this kernel can be given 
as follows (see~\cite{fr06} for details). For $a,b\in D_n$, write $a\preceq b$ if 
$$a_1\le b_1\le a_2 \le \cdots \le a_n \le b_n.$$
For $y=\{e^{ia_1},\ldots,e^{ia_n}\}$ and $z=\{e^{ib_1},\ldots,e^{ib_n}\}$, 
where $a,b\in D_n$, write $y\preceq_r z$ if either 
$a\preceq b$ and $\sum_j (b_j-a_j) =r$, or 
$b\preceq a$ and $\sum_j (b_j-a_j)+2\pi =r$.
The measure $p_r(y,\cdot)$ is supported on the set 
$$F_r(y)=\{z\in C_n:\ y\preceq_r z\}.$$ This set can be identified
with the disjoint union of a pair of $(n-1)$-dimensional Euclidean sets 
$$\{ b\in D_n:\ a\preceq b ,\  \sum_j (b_j-a_j) =r\}\cup\{b\in D_n:\ a\succeq b,\ \sum_j (b_j-a_j) =r-2\pi\},$$
each of which can be endowed with $(n-1)$-dimensional
Lebesgue measure giving a natural measure on their union.
The measure obtained on $F_r(y)$ via this identification can be extended to a measure
$\nu_r(y,\cdot)$ on $C_n$ by setting $\nu_r(y,C_n \backslash F_r(y))=0$.
The following identity can be deduced from~\cite[Lemma 2]{fr06}.
\begin{prop}\label{br}
\begin{equation}\label{pr}
p_r(y,dz)= \frac{1}{\gamma_r}  \frac{\Delta(z)}{\Delta(y)} \nu_r(y,dz) ,
\end{equation}
where $\gamma_r= |1-e^{ir}|^{(n-1)}/(n-1)!$.
\end{prop}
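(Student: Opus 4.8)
The plan is to start from the known description of the reflection kernel $p_r$ and reduce everything to the cited identity~\cite[Lemma 2]{fr06}. The key point is that $p_r = p_x$ with $x = \{e^{ir},1,\ldots,1\}$, so by the definition of the kernels $\{p_x\}$, $p_r(y,dz)$ is the law of the conjugacy class of $D_x M D_y M^*$ with $M$ Haar on $U(n)$. Since $D_x = \mathrm{diag}(e^{ir},1,\ldots,1)$ is a single complex reflection, this is exactly the setup of~\cite{fr06}, and Lemma~2 there (together with the surrounding analysis) provides the joint density of the eigenvalue angles $b \in D_n$ of the product, given those of $y$. I would first restate that density precisely: on each of the two Euclidean pieces making up $F_r(y)$, it is a constant multiple of $\Delta(z)/\Delta(y)$ with respect to $(n-1)$-dimensional Lebesgue measure, the constant being the combinatorial factor coming from integrating over the Haar element $M$.

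Next I would assemble the two pieces. The set $F_r(y)$ is, by the description in the text, the disjoint union of $\{b\in D_n: a\preceq b,\ \sum_j(b_j-a_j)=r\}$ and $\{b\in D_n: b\preceq a,\ \sum_j(b_j-a_j)=r-2\pi\}$, and $\nu_r(y,\cdot)$ is precisely the measure that restricts to $(n-1)$-dimensional Lebesgue measure on each. So~(\ref{pr}) amounts to the assertion that the Radon--Nikodym derivative of $p_r(y,\cdot)$ with respect to $\nu_r(y,\cdot)$ equals $\gamma_r^{-1}\Delta(z)/\Delta(y)$ on \emph{both} pieces, with the \emph{same} normalising constant $\gamma_r = |1-e^{ir}|^{n-1}/(n-1)!$. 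I would verify the common constant by a symmetry/continuity argument: the two pieces are exchanged under rotating all angles by $2\pi$ and relabelling, an operation under which both $\Delta$ and Lebesgue measure are invariant and under which $r \leftrightarrow 2\pi - r$ in a way that leaves $|1-e^{ir}|$ unchanged; hence the constant must agree on the two pieces. Alternatively, one can simply compute the constant on one piece from Lemma~2 of~\cite{fr06} and invoke the symmetry to transfer it.

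Finally I would pin down the value $\gamma_r = |1-e^{ir}|^{n-1}/(n-1)!$. Two routes are available. The direct route is to extract the constant explicitly from~\cite[Lemma 2]{fr06}, which should already contain it (possibly in a slightly different normalisation that needs to be matched to our normalisation of Haar measure and of the identification $dx$). The cleaner route, which I would prefer to present, is to determine $\gamma_r$ by a consistency check: integrate~(\ref{pr}) over $z$ and use that $p_r(y,\cdot)$ is a probability measure, so that
\begin{equation}
\gamma_r \Delta(y) = \int_{F_r(y)} \Delta(z)\, \nu_r(y,dz);
\end{equation}
evaluating the right-hand side reduces, piece by piece, to an integral of a Vandermonde-type function over a simplex $\{t\in\R^n_{\ge 0}: \sum_j t_j = r\}$ (respectively $=2\pi-r$), which is a standard Selberg-type computation yielding $|1-e^{ir}|^{n-1}\Delta(y)/(n-1)!$ after using $\sum_j t_j$ fixed. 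The main obstacle is precisely this last constant-evaluation step: one must carry out the simplex integral carefully, being sure that the two pieces contribute so as to reproduce a single clean factor, and that the normalisations of Haar measure, of $\mu$, and of the Lebesgue identification are all consistent with the statement. Everything else is bookkeeping on top of~\cite[Lemma 2]{fr06}.
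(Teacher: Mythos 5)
Your proposed route is genuinely different from the one the paper actually follows. You want to extract the density of $p_r$ directly from~\cite[Lemma 2]{fr06} and then pin down $\gamma_r$ by a normalisation/consistency argument. The paper states that Proposition~\ref{br} ``can be deduced'' from~\cite[Lemma 2]{fr06} but does not carry that out; its actual proof is a spectral argument given at the end of Section~5. There, the Weyl character formula is combined with Lemma~\ref{iform} (a determinantal formula for the interlacing kernel $\I_q$) and the Cauchy--Binet identity to compute the eigenvalue of the integral operator with kernel $\tilde c_q^{-1}\Delta(z)\I_q(y,z)/\Delta(y)$ on each character $\chi_\lambda$. Independently, the eigenvalue of $m_q = c_q^{-1}\int_0^{2\pi}|1-e^{ir}|^{n-1}q^r p_r\,dr$ on $\chi_\lambda$ is computed from~(\ref{efun}). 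Matching eigenvalues and invoking completeness of $\{\chi_\lambda\}$ in $L_2(C_n,\mu)$ gives~(\ref{mi}) for all $q$, from which Proposition~\ref{br} follows by Laplace-transform uniqueness. That argument buys self-containedness (no external density formula needs to be imported) and produces the normalisation constant automatically as an eigenvalue, rather than by a separate integration.

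Two things in your sketch deserve caution. First, the step you call ``a standard Selberg-type computation yielding $|1-e^{ir}|^{n-1}\Delta(y)/(n-1)!$'' is not a Selberg integral: you need $\int_{F_r(y)}\Delta(z)\,\nu_r(y,dz)=\gamma_r\,\Delta(y)$, and $\Delta(z)$ is not a product of power-law factors over a simplex in the Selberg sense. What actually makes this integral tractable is expanding $\Delta(z)$ via the Weyl denominator (a determinant of exponentials) and integrating term by term with Cauchy--Binet, which is precisely what the paper's proof does; so the computation you are deferring \emph{is} the heart of the matter, not bookkeeping. Second, the two-sentence symmetry argument for why the constant is the same on both pieces of $F_r(y)$ is fragile as stated: the map you describe sends $r\leftrightarrow 2\pi-r$ and so does not fix $p_r$. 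The clean way to see that a single constant works is to observe that the two pieces of $F_r(y)$ are both charts for one smooth manifold of circularly interlaced configurations (the split is an artefact of choosing angle representatives in $[0,2\pi)$), and that~\cite[Lemma 2]{fr06} already gives the density on that manifold, not piecewise. With those two points repaired your argument would go through, and it would constitute a legitimate alternative proof to the one printed in the paper.
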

In Section 5, we will present a determinantal formula for $\int_0^{2\pi}  q^r \nu_r(y,dz)dr$,
where $q>0$ is a parameter, and use this to give an alternative proof of Proposition~\ref{br}.

Another operator which will play a role in this paper is the Dirichlet Laplacian 
on the (closed) alcove
\begin{equation}\label{alcove}
A_n=\{\t\in\R^n:\ \t_1\le\t_2\le\cdots\le\t_n\le\t_1+2\pi\}.
\end{equation}
Let $(Q_t)$ denote the transition semigroup of a standard Brownian motion 
conditioned never to exit $A_n$. This is a Doob transform of the Brownian
motion which is killed when it exits $A_n$, via the positive eigenfunction
\begin{equation}\label{h}
h(\t)=\prod_{1\le l<m\le n} | e^{i\t_l}-e^{i\t_m}|.
\end{equation}
We can identify $C_n$ with $\exp(iA_n)/\mathfrak{C}_n$,
where $\mathfrak{C}_n$ denotes the group of cyclic permutations
which acts on $A_n$ by permuting coordinates.  It is known~\cite{berard} 
that the eigenfunctions
of the induced semigroup $(\hat Q_t)$ on $L_2(C_n,\mu)$ are given by 
the irreducible characters $\{\chi_\l,\ \l\in\O_n\}$, which implies that
$\hat Q_t\in\M$, for each $t>0$. Note that the corresponding process
on $C_n$ can be thought of as $n$ standard Brownian motions on the circle
conditioned never to collide.

We will also consider discrete analogues of the above processes. Set 
$$A_n^N=(N A_n/2\pi)\cap\Z^n,\quad C_n^N=\exp(2\pi iA_n(N)/N)/\mathfrak{C}_n,
\quad\O_n^N=\{\l\in\O_n:\ \l_1\le N-1\}.$$
Much of the above discussion can be replicated in this setting, but for our purposes it
suffices to make the following remark. Think of $C_n^N$ as the set of configurations
of $n$ particles at distinct locations on the discrete circle with $N$ positions.
Consider the random walk in $C_n^N$ where at each step a particle is chosen at 
random and moved one position anti-clockwise if that position is vacant; if it is
not vacant the process is killed. It is known that the restriction of the function 
$\Delta$ to $C_n^N$ is the Perron-Frobenius eigenfunction for this sub-Markov
chain. In fact, a complete set of eigenfunctions (with respect to the measure $\Delta(x)^2$) 
is given by the restrictions of the characters $\{\chi_\l,\ \l\in\O_n^N\}$ to $C_n^N$.
(See, for example,~\cite{kor02}.) As we shall see later, the discrete analogues of the
$\nu_r$ (thought of as operators) commute with the transition kernel of this killed 
random walk and therefore share these eigenfunctions.

\section{Couplings of interlaced  random walks}

The Markov chain on $C_n$ with transition probabilities $p_r$,
defined by (\ref{pr-def}), can be lifted to a Markov chain
on $A_n$, which is better suited to the constructions of this section. To 
make this  precise let us 
say $x$ and $x^\prime$  belonging to $A_n$ are $r$-interlaced, for some $r 
\in (0,2\pi)$, if 
\begin{equation}
x_i^\prime\in [x_{i},x_{i+1}] \text{ for } i=1,2,\ldots ,n  \text{ and } 
\sum_{i=1}^n (x^\prime_i-x_i)=r,
\end{equation}
when we adopt the convention that $x_{n+1}=x_1+2\pi$.
In this case we will write $x \preceq_r x^\prime $. 
Define $\pi: A_n \rightarrow C_n$ by $\pi(x)=\{e^{ix_1}, \ldots, e^{ix_n}\}$.
Denote  by $l_r(x,dx^\prime)$ the $(n-1)$-dimensional Lebesgue measure on 
the set $$G_r(x)=\{ x^\prime \in A_n: x \preceq_r x^\prime \} .$$
Clearly the restriction of $\pi$ to $G_r(x)$ is injective, with $\pi(G_r(x))=F_r(\pi(x))$.
Moreover, for measurable $B\subset A_n$, $l_r(x,B)= \nu_r(\pi(x),\pi(B\cap G_r(x)))$.
Thus, if we define, for measurable $B\subset A_n$,
\begin{equation}\label{q}
q_r(x,B)= p_r(\pi(x),\pi(B\cap G_r(x)))
\end{equation} then, by Proposition \ref{br}, 
\begin{eqnarray*}
q_r(x,B) &=& \int_{\pi(B\cap G_r(x))} p_r(\pi(x),dz) \\
&=& \int_{\pi(B\cap G_r(x))} \frac{1}{\gamma_r} \frac{\Delta(z)}{\Delta(\pi(x))} \nu_r(\pi(x),dz)\\
&=& \int_B \frac{1}{\gamma_r} \frac{\Delta(\pi(x^\prime))}{\Delta(\pi(x))} l_r(x,dx^\prime),
\end{eqnarray*}
and hence,
\begin{equation}
q_r(x,dx^\prime)= \frac{1}{\gamma_r}
\frac{\Delta(\pi(x^\prime))}{\Delta(\pi(x))} l_r(x,dx^\prime).
\end{equation}
We will refer to a Markov chain with values in $A_n$ and transition 
probabilities $q_r$ as an $r$-interlacing random walk.  As far as we
are aware, such processes have not previously appeared in the literature.
Note that since $p_rp_s=p_sp_r$ we have $q_rq_s=q_sq_r$ or, equivalently, 
$l_rl_s=l_sl_r$.

The goal of this section is to construct, for  given $r,s \in (0,2\pi)$,  
two different Markovian couplings   $\bigl(X(k), Y(k); k \geq 0 )$ 
of a pair of $r$-interlacing random walks on $A_n$, having the 
property that $X(k)$ and $Y(k)$ are $s$-interlaced for each $n$ and 
moreover, for each $l \geq 0$, the trajectory  $\bigl(Y(k); 0 \leq k \leq 
l )$ 
will be a deterministic function of 
$Y(0)$ together with the trajectory $\bigr(X(k); 0 \leq k \leq l\bigr)$.

The existence of such  couplings is suggested by the commutation relation 
$q_r q_s=q_s q_r$, equivalently $l_rl_s=l_sl_r$.
For any $u,v \in A_n$   consider the two sets $\tau_{u,v}=\{ x \in A_n: u 
\preceq_ s x \preceq_r  v\}$ and $\tau^\prime_{u,v}=\{y \in A_n : u 
\preceq_r y 
\preceq_s v \}$. If either is non-empty, then they both are, and in this 
case they are $(n-1)$-dimensional polygons, and  the  relation 
$l_rl_s=l_sl_r$  can be interpreted as saying these two polygons have the 
same $(n-1)$-dimensional volume. In fact the two polygons are congruent. 
Define 
\begin{equation}\label{phi}
y=\phi_{u,v}(x)
\end{equation} via
\begin{equation}
y_i=  \min(u_{i+1},v_i)+\max (u_i,v_{i-1}) -x_i.
\end{equation}
It is easy to see that  $\phi_{u,v}$ is an isometry from $\tau_{u,v}$ to 
$\tau_{u,v}^\prime$ using the facts that
\[
y_i \in  [\max(u_{i},v_{i-1}),\min (u_{i+1},v_{i}) ] \text{ if and only if 
}  x_i \in  [\max(u_{i},v_{i-1}),\min (u_{i+1},v_{i}) ], 
\]
and 
\[
\sum_{i=1}^n y_i= \sum_{i=1}^n  (\min(u_{i+1},v_i)+\max 
(u_i,v_{i-1})-x_i)= \sum_{i=1}^n (u_i+v_i-x_i).
\]

\begin{prop}
\label{push}
Let $(X(k); k \geq 0)$ be an $r$-interlacing random walk, starting from 
$X(0)$ having the distribution $q_s(y,dx)$ for some given $y \in A_n$,
where $q_s$ is defined by (\ref{q}).
Let the process $(Y(k); k \geq 0)$ be given by  $Y(0)=y$ and  
\[
Y(k+1)=\phi_{Y(k), X(k+1)} (X(k)), \text { for }  k \geq 0,
\]
where $\phi$ is defined by (\ref{phi}).
Then   $(Y(k); k \geq 0)$ is distributed as an $r$-interlacing random walk 
starting from $y$.
\end{prop}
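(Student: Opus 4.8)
The plan is to verify that the coupled process $(X(k),Y(k);k\ge 0)$ is a Markov chain on the set $\{(x,y)\in A_n\times A_n:\ x\preceq_s y\}$ whose $X$-marginal transition is $q_r$ and whose $Y$-marginal transition is also $q_r$, and that $Y(k)\preceq_s X(k)$ is preserved. First I would check the interlacing is preserved: since $X(k)\preceq_s Y(k)$ reads (with the cyclic convention) $X(k)\preceq_s Y(k)$, and $X(k)\preceq_r X(k+1)$, the point $X(k)$ lies in $\tau_{Y(k),X(k+1)}$, so by the isometry property of $\phi_{u,v}$ recorded just before the statement, $Y(k+1)=\phi_{Y(k),X(k+1)}(X(k))$ lies in $\tau'_{Y(k),X(k+1)}$, i.e.\ $Y(k)\preceq_r Y(k+1)\preceq_s X(k+1)$. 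Thus both $Y$ is an $r$-interlacing trajectory and the $s$-interlacing between $X$ and $Y$ is maintained for all $k$; in particular $Y(0)=y\preceq_s X(0)$ holds because $X(0)$ was drawn from $q_s(y,\cdot)$, which is supported on $G_s(y)$.

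Next I would identify the joint transition kernel. Given $(X(k),Y(k))=(x,y)$ with $x\preceq_s y$... wait, the convention in the statement is $X(0)\sim q_s(y,\cdot)$ so $y\preceq_s X(0)$; I would keep careful track that the relation is $Y(k)\preceq_s X(k)$ throughout. Given $(x,y)$ with $y\preceq_s x$, the chain moves $x\mapsto x'$ according to $q_r(x,dx')$ and then sets $y'=\phi_{y,x'}(x)$, a deterministic function. So the joint kernel is
\[
K\bigl((x,y),d(x',y')\bigr)=q_r(x,dx')\,\delta_{\phi_{y,x'}(x)}(dy').
\]
The key computation is to push this forward onto the $Y$-coordinate and show the result is $q_r(y,dy')$, with the conditional law of $X$ given $Y$ being again of the product form that makes the pair Markov. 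Concretely, using $q_r(x,dx')=\gamma_r^{-1}\,\Delta(\pi(x'))\Delta(\pi(x))^{-1}\,l_r(x,dx')$ and the fact that for fixed $y,y'$ the map $x\mapsto x'=\phi_{y,x'}(x)$ — more precisely the correspondence $(x,x')\leftrightarrow(x,y')$ — is, for each fixed $x$, the isometry $\phi_{y,x'}$ of the previous paragraph but I actually need it the other way: for fixed $y$, as $x$ ranges over $\tau_{y,\cdot}$-type regions, the pair $(x,x')$ with $x\preceq_s$-interlaced to $y$ and $x\preceq_r x'$ corresponds bijectively and isometrically to the pair $(y',x)$ with $y\preceq_r y'\preceq_s x$ via the same formula $y'_i=\min(y_{i+1},x'_i)+\max(y_i,x'_{i-1})-x_i$, which we can also solve for $x'_i$. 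I would change variables from $(x,x')$ to $(x,y')$, check the Jacobian is $1$ (it is $\pm1$ coordinatewise by the piecewise-linear isometry), verify that the interlacing constraints transform correctly (this is the displayed equivalence $y_i\in[\max(u_i,v_{i-1}),\min(u_{i+1},v_i)]\iff x_i\in[\ldots]$ together with the sum condition), and observe that the Vandermonde factors rearrange as $\Delta(\pi(x'))/\Delta(\pi(x)) = \bigl(\Delta(\pi(x'))/\Delta(\pi(y'))\bigr)\cdot\bigl(\Delta(\pi(y'))/\Delta(\pi(x))\bigr)$ so that after integrating out $x$ one is left with exactly $\gamma_r^{-1}\Delta(\pi(y'))\Delta(\pi(y))^{-1}l_r(y,dy')=q_r(y,dy')$; the integral $\int \Delta(\pi(x))^{-1}\cdots$ over the appropriate polygon producing $\Delta(\pi(y'))/\Delta(\pi(y))$ times the right constant is precisely an incarnation of the commutation $l_sl_r=l_rl_s$, equivalently the statement that $\Delta\circ\pi$ is the relevant eigenfunction. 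I would then induct on $k$: given the pair has the stated joint law at time $k$ (i.e.\ $Y(k)\sim q_r^{\,k}(?,\cdot)$-transported and $X(k)\mid Y(k)\sim q_s(Y(k),\cdot)$ in the appropriate sense — here actually the cleanest inductive hypothesis is that $(X(k),Y(k))$ has the law where $Y(k)$ evolves as $q_r$ from $y$ and $X(k)$ is conditionally $q_s(Y(k),\cdot)$), the one-step computation above propagates it.

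The main obstacle I expect is the change-of-variables/Fubini step: one must be careful that the isometry $\phi_{y,x'}$ has the right domain and range as the auxiliary variables vary, that the cyclic convention $x_{n+1}=x_1+2\pi$ (and the two cases in the definition of $\preceq_r$, $\preceq_s$, depending on whether the total displacement wraps around) is handled consistently, and that the polygons $\tau$ and $\tau'$ really are swapped by the formula so that the supports match. Once the geometry is set up correctly, the algebra of the Vandermonde ratios and the constant $\gamma_r$ is routine given Proposition~\ref{br} and the already-noted relation $l_rl_s=l_sl_r$. An alternative, perhaps cleaner, route avoiding explicit Jacobians: show directly that for any bounded test function $f$ on $A_n$, $\E[f(Y(k+1))\mid Y(k),X(k)] $ depends only on $Y(k)$ and equals $q_rf(Y(k))$, by writing the expectation as an integral against $q_r(X(k),dx')$ and using the isometry to re-express it as an integral against $l_r(Y(k),dy')$ — this isolates exactly the one identity that needs the commutation relation and defers the inductive Markov-property bookkeeping to a short separate argument.
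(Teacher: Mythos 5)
Your core idea is the same as the paper's: the one‑step transfer of the product law hinges on $\phi_{u,v}$ being a measure-preserving bijection from $\tau_{u,v}$ onto $\tau'_{u,v}$ together with the commutation $q_sq_r=q_rq_s$. You also correctly verify that the relation $Y(k)\preceq_s X(k)$ is propagated, and the rearrangement of the Vandermonde factors (once you include the factor $\Delta(\pi(x))/\Delta(\pi(y))$ from $q_s(y,dx)$, which in fact cancels $\Delta(\pi(x))^{-1}$ from $q_r(x,dx')$, leaving only the Lebesgue measures $l_s\,l_r$ to manipulate) is essentially the right computation. However, there are two genuine gaps.

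First, your inductive hypothesis is too weak. You propose to induct on the statement ``$(X(k),Y(k))$ has the law where $Y(k)$ evolves as $q_r$ from $y$ and $X(k)$ is conditionally $q_s(Y(k),\cdot)$.'' This only controls the bivariate law of $(X(k),Y(k))$ at a fixed time, and hence only the one-dimensional marginals of $Y$. To conclude that $Y$ is a Markov chain you must control the conditional law of $X(m)$ given the \emph{entire} history $(Y(1),\dots,Y(m))$, not merely given $Y(m)$; otherwise the step from $m$ to $m+1$ doesn't go through, since $Y(m+1)$ is computed from $X(m)$ and you have no handle on the conditional law of $X(m)$ given $\sigma(Y(1),\dots,Y(m))$. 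The paper's induction hypothesis is precisely that the law of $(Y(1),\ldots,Y(m),X(m))$ is $q_r(y,dy(1))\cdots q_r(y(m-1),dy(m))\,q_s(y(m),dx(m))$, which encodes both that $(Y(1),\dots,Y(m))$ is a $q_r$-chain and that $X(m)\mid(Y(1),\dots,Y(m))\sim q_s(Y(m),\cdot)$; this is the minimal strengthening that makes the induction close. (Alternatively, one can prove the single one-step intertwining identity $\int q_s(y,dx)\,q_r(x,dx')\,\delta_{\phi_{y,x'}(x)}(dy')=q_r(y,dy')\,q_s(y',dx')$ and then invoke the general Diaconis--Fill/Rogers--Pitman criterion, which is the content of reference \cite{df} in the paper; but as written your induction doesn't do either.)

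Second, your proposed ``alternative, cleaner route'' is based on a false claim. You assert that $\E[f(Y(k+1))\mid Y(k),X(k)]$ depends only on $Y(k)$. It does not: with $Y(k)=y$ and $X(k)=x$ fixed, $\E[f(Y(k+1))\mid Y(k)=y,X(k)=x]=\int q_r(x,dx')\,f\bigl(\phi_{y,x'}(x)\bigr)$, and $\phi_{y,x'}(x)$ depends explicitly on $x$ through $\phi_{y,x'}(x)_i=\min(y_{i+1},x'_i)+\max(y_i,x'_{i-1})-x_i$. The correct (and nontrivial) fact is that $\E[f(Y(k+1))\mid Y(1),\dots,Y(k)]=q_rf(Y(k))$, i.e. conditioning on the $Y$-history alone kills the dependence on $X(k)$ after averaging with the conditional law $q_s(Y(k),\cdot)$ of $X(k)$ — which is exactly the content of the stronger inductive hypothesis above, so this ``shortcut'' presupposes what you are trying to prove.
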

\begin{proof}
We prove by induction on $m$ that the law of $\bigl(Y(1), \ldots Y(m), 
X(m)\bigr)$ is given by $q_r(y,dy(1))\ldots 
q_r(y(m-1),dy(m))q_s(y(m),dx(m))$. Suppose this holds for some $m$. Then, 
since $Y(1)\ldots Y(m)$ are measurable with respect to $X(0),X(1), \ldots 
X(m)$ the joint law of  $\bigl(Y(1), \ldots Y(m), X(m), X(m+1)\bigr)$ is 
given by
\[
q_r(y,dy(1))\ldots q_r(y(m-1),dy(m))q_s(y(m),dx(m))q_r(x(m),dx(m+1)).
\]
Equivalently we may say that the law of  $\bigl(Y(1), \ldots Y(m),  
X(m+1)\bigr)$ is 
\[
q_r(y,dy(1))\ldots q_r(y(m-1),dy(m))(q_s q_r)(y(m),dx(m+1))
\]
and that the conditional law of $X(m)$ given the same variables is uniform 
on $\tau_{Y(m) X(m+1)}$. From  the measure preserving properties of the 
maps $\phi_{u,v}$, it follows that, conditionally on  $\bigl(Y(1), \ldots 
Y(m), X(m+1)\bigr)$,    $Y(m+1)$ is distributed  uniformly on 
$\tau^\prime_{Y(m),X(m+1)}$, and the inductive hypothesis for $m+1$ 
follows from this and the commutation relation $q_sq_r=q_rq_s$. 
\end{proof}

The dynamics of the coupled processes $\bigl(X(k),Y(k); k \geq 0 \bigr)$ 
are illustrated in  the following two diagrams, in which interlacing 
configurations $x=X(k)$ and $y=Y(k)$ are shown together with updated 
configurations  $x^\prime=X(k+1)$ and $y^\prime=Y(k+1)$. It is natural to 
think of these as particle positions on ( a portion of ) the circle.
For simplicity we consider an example where $x^\prime$ and $x$ differ only 
in the $i$-th co-ordinate.

\setlength{\unitlength}{0.5cm}
\begin{picture}(14,6)
\put(1,2){\line(1,0){22}}
\put(1,4){\line(1,0){22}}
\put(2,1){\text{$y^\prime_{i-1}$}}
\put(2,5){\text{$y_{i-1}$}}
\put(2,2){\circle{0.6}}
\put(2,4){\circle{0.6}}
\put(2,4){\vector(0,-2){1.5}}
\put(5,1){\text{$x^\prime_{i-1}$}}
\put(5,5){\text{$x_{i-1}$}}
\put(5,2){\circle*{0.6}}
\put(5,4){\circle*{0.6}}
\put(5,4){\vector(0,-2){1.5}}
\put(10,1){\text{$y^\prime_{i}$}}
\put(8,5){\text{$y_{i}$}}
\put(10,2){\circle{0.6}}
\put(8,4){\circle{0.6}}
\put(8,4){\vector(1,-1){1.7}}
\put(13,1){\text{$x^\prime_{i}$}}
\put(11,5){\text{$x_{i}$}}
\put(13,2){\circle*{0.6}}
\put(11,4){\circle*{0.6}}
\put(11,4){\vector(1,-1){1.7}}
\put(15,1){\text{$y^\prime_{i+1}$}}
\put(15,5){\text{$y_{i+1}$}}
\put(15,2){\circle{0.6}}
\put(15,4){\circle{0.6}}
\put(15,4){\vector(0,-2){1.5}}
\put(20,1){\text{$x^\prime_{i+1}$}}
\put(20,5){\text{$x_{i+1}$}}
\put(20,2){\circle*{0.6}}
\put(20,4){\circle*{0.6}}
\put(20,4){\vector(0,-2){1.5}}
\end{picture}

The configuration $y^\prime$ is of course determined by $y$, $x$ and 
$x^\prime$ together. The simplest possiblity is shown above, in this case 
$i$th $y$ particle advances by the same amount as the $i$th $x$-particle.  
However should the $i$th $x$-particle advance beyond $y_{i+1}$ then it it 
pushes the $(i+1)$th $y$ particle along,
 whilst the increment in the position of the $i$th $y$ particle is limited 
to $y_{i+1}-x_i$.

\begin{picture}(14,6)
\put(1,2){\line(1,0){22}}
\put(1,4){\line(1,0){22}}
\put(2,1){\text{$y^\prime_{i-1}$}}
\put(2,5){\text{$y_{i-1}$}}
\put(2,2){\circle{0.6}}
\put(2,4){\circle{0.6}}
\put(2,4){\vector(0,-2){1.5}}
\put(5,1){\text{$x^\prime_{i-1}$}}
\put(5,5){\text{$x_{i-1}$}}
\put(5,2){\circle*{0.6}}
\put(5,4){\circle*{0.6}}
\put(5,4){\vector(0,-2){1.5}}
\put(10,1){\text{$y^\prime_{i}$}}
\put(8,5){\text{$y_{i}$}}
\put(10,2){\circle{0.6}}
\put(8,4){\circle{0.6}}
\put(8,4){\vector(1,-1){1.7}}
\put(15,1){\text{$x^\prime_{i}=y^\prime_{i+1}$}}
\put(12,5){\text{$x_{i}$}}
\put(16.0,2){\circle{0.6}}
\put(15.9,2){\circle*{0.6}}
\put(12,4){\circle*{0.6}}
\put(12.3,3.8){\vector(2,-1){3}}
\put(15,5){\text{$y_{i+1}$}}
\put(15,4){\circle{0.6}}
\put(15,4){\vector(1,-2){0.8}}
\put(20,1){\text{$x^\prime_{i+1}$}}
\put(20,5){\text{$x_{i+1}$}}
\put(20,2){\circle*{0.6}}
\put(20,4){\circle*{0.6}}
\put(20,4){\vector(0,-2){1.5}}
\end{picture}

The  proof of Propostion \ref{push} made use of only  the measure 
preserving properties 
of the family of maps  $\phi_{uv}$, and consequently we can replace it by 
another family of maps having the same property and obtain a different 
coupling of the same processes.
The pushing interaction   in the coupling constructed above is also seen 
in the dynamics induced on Gelfand-Tsetlin patterns by the RSK 
correpondence. There exists a variant of the RSK algorithm (with column 
insertion replacing the more common row insertion), in which pushing is 
replaced by blocking. We will next describe a family 
$\psi_{uv}$ of measure-preserving maps that lead to a coupling with  such 
a  blocking interaction.

We recall first a version of the standard Skorohod lemma for periodic 
sequences. 
\begin{lem}[Skorohod] Suppose that $(z_i; i \in {\mathbf Z})$ is 
$n$-periodic and satisfies
$\sum_{i=1}^n z_i<0$. Then there exists a unique pair of $n$-periodic 
sequences $(r_i; i \in {\mathbf Z})$ and $(l_i; \in {\mathbf Z})$ such 
that
\[
r_{i+1}=r_i +z_{i}+l_{i+1} \qquad \text{ for all } i \in {\mathbf Z},
\] 
with the additional properties  $r_i \geq 0$, $l_i \geq 0$, and $ l_i>0 
\implies r_i=0$ for all $i \in {\mathbf Z}$.
\end{lem}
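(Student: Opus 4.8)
The plan is to establish existence and uniqueness separately, treating the $n$-periodic problem as a fixed-point statement on one period which we then extend by periodicity. First I would reduce to a genuinely finite problem: since everything is $n$-periodic, it suffices to determine $(r_1,\ldots,r_n)$ and $(l_1,\ldots,l_n)$, and the recursion closes up on one period because $r_{n+1}=r_1$ is forced by periodicity. Writing $s_i = z_1+z_2+\cdots+z_{i-1}$ for the partial sums (with $s_1=0$) and iterating the recursion $r_{i+1}=r_i+z_i+l_{i+1}$, one gets $r_i = r_1 + s_i + (L_i - l_1)$ where $L_i = l_1 + \cdots + l_i$ is the cumulative reflection term. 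The constraint $\sum_{i=1}^n z_i < 0$ guarantees that the ``drift'' $s_i$ is not periodic — it decreases by a fixed positive amount $|\sum z_i|$ each period — which is exactly what makes the reflected sequence bounded and the fixed point well-defined.

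For existence, I would give the reflection term explicitly. The standard Skorohod recipe suggests setting, for each $i$ in one period,
\[
L_i - l_1 = \max_{1 \le k \le i}\bigl(-s_k\bigr)^+ \quad\text{or rather}\quad r_i = \max\Bigl(\,\sup_{k \le i}(s_i - s_k),\ s_i - s_{\text{(previous cycle)}}\Bigr),
\]
so the precise formula will be a supremum of $s_i - s_k$ taken over all $k \le i$ ranging over the \emph{entire} two-sided sequence of indices; because of the strictly negative period drift this supremum is attained and finite, and one checks directly that $r_i$ so defined is $n$-periodic, nonnegative, and that the increments $l_{i+1} := r_{i+1} - r_i - z_i$ are nonnegative with $l_i > 0$ forcing $r_i = 0$ (the last claim because $l_i>0$ means the running supremum is refreshed at index $i$, i.e. $r_i = s_i - s_i = 0$). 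I would verify the defining recursion and the three sign/complementarity conditions as a short computation, then invoke periodicity to extend to all $i \in \mathbf{Z}$.

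For uniqueness, I would argue in the usual Skorohod way: if $(r_i,l_i)$ and $(\tilde r_i,\tilde l_i)$ are two solutions, consider $\delta_i = r_i - \tilde r_i$. Subtracting the recursions gives $\delta_{i+1} - \delta_i = l_{i+1} - \tilde l_{i+1}$, and a standard computation with the complementarity condition shows that $(\delta_i)^2$ is non-increasing in $i$; since $(\delta_i)$ is $n$-periodic it must therefore be constant, and then the complementarity condition forces $\delta_i \equiv 0$ (if $\delta_i>0$ for all $i$ then $\tilde r_i>0$ for all $i$, so $l_i \equiv 0$, hence $r$ has the pure drift $s_i$ which is not periodic, contradiction; symmetrically for $\delta_i<0$), after which $l_i = \tilde l_i$ follows from the recursion.

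The main obstacle is bookkeeping around the periodicity: unlike the classical one-sided Skorohod lemma on $\{0,1,2,\ldots\}$, here there is no initial condition to anchor $r_0$, so one must check that the candidate supremum-formula is genuinely $n$-periodic and well-defined (finite), which is precisely where the hypothesis $\sum_{i=1}^n z_i<0$ enters and must be used carefully — it is what prevents the reflected sequence from drifting to $+\infty$ and simultaneously rules out the degenerate $l\equiv 0$ solution in the uniqueness argument. Everything else is a routine adaptation of the classical proof.
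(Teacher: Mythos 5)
The paper states this lemma without proof, presenting it simply as ``a version of the standard Skorohod lemma for periodic sequences,'' so there is nothing in the paper to compare your argument against. Evaluated on its own terms, your proposal is correct and gives essentially the natural proof. The explicit formula is right: with $s_1=0$ and $s_{i+1}-s_i=z_i$, set
\[
r_i=\sup_{k\le i}(s_i-s_k)=s_i-\inf_{k\le i}s_k,
\]
the supremum being over all $k\in\mathbf{Z}$ with $k\le i$. The hypothesis $\sum_{i=1}^n z_i<0$ gives $s_{k-n}=s_k-\sum z_j\to+\infty$ as $k\to-\infty$, so $m_i:=\inf_{k\le i}s_k$ is attained and finite; moreover $m_{i+n}=m_i+\sum z_j$, from which $r_{i+n}=r_i$ follows immediately. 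Then $l_{i+1}:=r_{i+1}-r_i-z_i=m_i-m_{i+1}\ge 0$ is $n$-periodic, and $l_{i+1}>0$ iff the infimum strictly drops, iff $m_{i+1}=s_{i+1}$, iff $r_{i+1}=0$, which is exactly the complementarity condition. Your uniqueness argument is the standard Tanaka-type computation: writing $\delta_i=r_i-\tilde r_i$, the complementarity relations give $\delta_{i+1}(\delta_{i+1}-\delta_i)=-r_{i+1}\tilde l_{i+1}-\tilde r_{i+1}l_{i+1}\le 0$, hence $\delta_{i+1}^2\le\delta_{i+1}\delta_i\le\tfrac12(\delta_{i+1}^2+\delta_i^2)$ and so $\delta_i^2$ is nonincreasing; periodicity forces it constant, and a nonzero constant $\delta$ would make one of $r,\tilde r$ strictly positive everywhere, killing the corresponding $l$ and contradicting periodicity of that solution by the negative drift. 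You have correctly identified that the strict negativity of $\sum z_i$ does double duty here, ensuring finiteness of the reflection formula and ruling out the degenerate pure-drift solution in the uniqueness step; that is indeed the crux, and the rest is bookkeeping as you say.
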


A configuration $ (x_1,x_2 \ldots x_n)$ of $n$ points on the circle  will 
be implicitly extended to a sequence $(x_i ; i \in {\mathbf Z})$ 
satisfying $x_{i+n}=x_i+2\pi$.

\begin{prop}
Suppose that $u$, $v$, and $x$ are three configurations on the circle with 
$u\preceq_s x \preceq_r v$, where $s>r$.
Define an $n$-periodic sequence via
\[
z_i=v_i-x_i-x_{i+1}+u_{i+1}  \qquad \text { for } i \in {\mathbf Z},
\]
and let $(r,l)$ be the  associated solution to the Skorohod problem.
Set $y_i= x_{i}-l_i$, then $(y_1, \ldots, y_n)$ is configuration on the 
circle  such that $u \preceq_r y \preceq_s v$. 
\end{prop}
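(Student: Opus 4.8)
The plan is to reduce the two interlacing relations $u\preceq_r y$ and $y\preceq_s v$ to a single pointwise bound on the local-time sequence $l$ together with two identities obtained by summing over one period. First I would check that the Skorohod lemma applies and compute $\sum_{i=1}^n l_i$. Writing $z_i=(v_i-x_i)+(u_{i+1}-x_{i+1})$, $n$-periodicity of $z$ is immediate since $x_{i+n}=x_i+2\pi$, $u_{i+n}=u_i+2\pi$, $v_{i+n}=v_i+2\pi$ (the shifts cancel), and after reindexing the second sum, $\sum_{i=1}^n z_i=\sum_{i=1}^n(v_i-x_i)+\sum_{i=1}^n(u_i-x_i)=r-s<0$ by the definitions of $x\preceq_r v$ and $u\preceq_s x$ and the hypothesis $s>r$; so the lemma produces $(r,l)$. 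Summing $r_{i+1}=r_i+z_i+l_{i+1}$ over one period and using $n$-periodicity of $r$ and $l$ to cancel $\sum_i r_i$ and to re-index $\sum_i l_{i+1}=\sum_i l_i$ gives $\sum_{i=1}^n l_i=-\sum_{i=1}^n z_i=s-r$.

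The only thing I would extract from the complementarity conditions is the bound $0\le l_{i+1}\le\max(0,\,-r_i-z_i)$ for every $i$: if $l_{i+1}=0$ this is trivial, and if $l_{i+1}>0$ then $r_{i+1}=0$, so the recursion forces $l_{i+1}=-r_i-z_i$, which is then positive and hence equals $\max(0,-r_i-z_i)$. I would next bound the right-hand side in two ways, using $-r_i-z_i=-r_i+(x_i-v_i)+(x_{i+1}-u_{i+1})=-r_i+(x_i-u_{i+1})+(x_{i+1}-v_i)$: since $r_i\ge0$ and $x_i\le v_i$, $x_i\le u_{i+1}$ (from $x\preceq_r v$ and $u\preceq_s x$ respectively), the two groupings give $-r_i-z_i\le x_{i+1}-u_{i+1}$ and $-r_i-z_i\le x_{i+1}-v_i$, both right-hand sides being nonnegative. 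Hence $l_{i+1}\le x_{i+1}-\max(u_{i+1},v_i)$, i.e. $y_{i+1}\ge\max(u_{i+1},v_i)$, while on the other side $y_i=x_i-l_i\le x_i\le\min(u_{i+1},v_i)$. Re-indexing, for every $i\in{\mathbf Z}$,
\[
\max(u_i,v_{i-1})\ \le\ y_i\ \le\ \min(u_{i+1},v_i).
\]

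From this double inequality the conclusion is mechanical. The left halves at indices $i$ and $i+1$ give $u_i\le y_i$ and $v_i\le y_{i+1}$, while the right halves give $y_i\le u_{i+1}$ and $y_i\le v_i$; combining, $u_i\le y_i\le u_{i+1}$ and $y_i\le v_i\le y_{i+1}$, which are exactly the coordinatewise conditions underlying $u\preceq_r y$ and $y\preceq_s v$, and $y_i\le u_{i+1}\le y_{i+1}$ shows $y$ is weakly increasing; together with $y_{i+n}=x_{i+n}-l_{i+n}=y_i+2\pi$ this says $y$ is a genuine configuration on the circle. Finally the two sum identities supply the scalar parts: $\sum_{i=1}^n(y_i-u_i)=\sum_{i=1}^n(x_i-u_i)-\sum_{i=1}^n l_i=s-(s-r)=r$ and $\sum_{i=1}^n(v_i-y_i)=\sum_{i=1}^n(v_i-x_i)+\sum_{i=1}^n l_i=r+(s-r)=s$, so indeed $u\preceq_r y\preceq_s v$. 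The only step requiring genuine care is the first one of the middle paragraph — teasing the \emph{upper} bound on $l_{i+1}$ out of the complementarity condition, since the recursion by itself controls $l_{i+1}$ only from below; once that is secured, the rest is sign-bookkeeping driven by $u\preceq_s x\preceq_r v$.
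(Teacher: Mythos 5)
Your proof is correct and follows essentially the same route as the paper's: compute $\sum_i z_i = r-s < 0$ to justify the Skorohod construction, sum the recursion to get $\sum_i l_i = s-r$ and the scalar parts of the interlacing, then establish the pointwise bound $\max(u_i,v_{i-1})\le y_i\le\min(u_{i+1},v_i)$ via a bound on $l$. The only cosmetic difference is that the paper invokes the standard Skorohod estimate $0\le l_i\le z_{i-1}^-$ directly, whereas you re-derive the (slightly tighter) bound $l_{i+1}\le\max(0,-r_i-z_i)$ from the complementarity condition and then discard the $-r_i$ term anyway; both funnel into the same inequality $l_i\le x_i-\max(u_i,v_{i-1})$.
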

\begin{proof}
Notice that $\sum_{i=1}^n z_i=r-s<0$ so the appeal to the Skorohod 
construction is legitimate.

Summing the Skorohod equation gives $ \sum_{i=1}^n z_i = - \sum_{i=1}^n 
l_i$. Consequently,  using the definition of $y$,
\[
\sum_{i=1}^n  ( y_i-u_i ) = \sum_{i=1}^n  (x_{i}-l_i-u_i)= s - 
\sum_{i=1}^n l_i= r.
\]
Similarly $\sum_{i=1}^n  ( v_i-y_i )=s$.

To conclude we will verify the inequalities
\[
\max(u_i,v_{i-1})\leq y_i \leq x_{i} \leq \min(u_{i+1}, v_i).
\]
It is easily checked that $z_{i-1}^-\leq \min(x_{i}-u_i, x_{i}-v_{i-1})$. 
The solution of the Skorohod problem satisfies
$0 \leq l_i \leq z_{i-1}^-$. Together with the definition of $y_i$, this 
gives the desired inequalities.
\end{proof}

By virtue of the preceeding result we may define $\psi_{u,v}: \tau_{uv} 
\rightarrow \tau^\prime_{uv}$ via $\psi_{uv}(x)= y$.

\begin{prop}
$\psi_{uv}$ is a measure-preserving bijection.
\end{prop}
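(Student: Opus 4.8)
The plan is to exploit the explicit piecewise‑affine structure of $\psi_{uv}$ and reduce the statement to a Jacobian computation together with a volume count. Recall from the preceding proposition that $\psi_{uv}(x)=y=x-l$, where $l$ is the blocking part of the Skorohod solution associated with $z_i=v_i-x_i-x_{i+1}+u_{i+1}$. Regarding $l$ as a function of $x$ gives an extension $\Psi$ of $\psi_{uv}$ to a neighbourhood of $\tau_{uv}$ in $\R^n$. First I would decompose this neighbourhood into finitely many polyhedra $R_A$, indexed by the \emph{active set} $A=\{i:\ l_i>0\}$, on each of which $\Psi$ is affine: on an excursion interval between cyclically consecutive elements $i_k<i_{k+1}$ of $A$ one has $l_j=0$ for $i_k<j<i_{k+1}$, and summing the Skorohod equation over the interval gives $l_{i_{k+1}}=-\sum_{i_k\le m<i_{k+1}} z_m$. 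Substituting the definition of $z_m$ yields, on $R_A$,
\[
\Psi(x)_j=x_j\quad(j\notin A),\qquad
\Psi(x)_{i_{k+1}}=-x_{i_k}-2\!\!\sum_{i_k<m<i_{k+1}}\!\! x_m+\mathrm{const}.
\]

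The second step is the Jacobian computation. Since the rows of $D\Psi$ indexed by $j\notin A$ are standard basis vectors, $\det D\Psi=\det\bigl(D\Psi|_{A\times A}\bigr)$, and the $A\times A$ block has, in row $i_{k+1}$, the single entry $-1$ in column $i_k$; thus $D\Psi|_{A\times A}=-P$, where $P$ is the permutation matrix of the cyclic shift $i_k\mapsto i_{k+1}$ of $A$. As that shift is a single $|A|$‑cycle, $\det(-P)=(-1)^{|A|}(-1)^{|A|-1}=-1$, so $\det D\Psi\equiv-1$ on every piece. Moreover $\sum_i l_i=-\sum_i z_i$ gives $\sum_i\Psi(x)_i=-\sum_i x_i+\mathrm{const}$, so $\Psi$ carries the hyperplane $\{\sum x_i=\sum u_i+s\}$ containing $\tau_{uv}$ onto the parallel hyperplane $\{\sum y_i=\sum u_i+r\}$ containing $\tau'_{uv}$, acting by $-1$ on the transverse direction; hence the restriction of $\psi_{uv}$ to each $R_A\cap\tau_{uv}$ is affine with Jacobian of absolute value $1$ for $(n-1)$‑dimensional Lebesgue measure.

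It then remains to upgrade ``locally volume preserving'' to ``measure‑preserving bijection''. By the preceding proposition $\psi_{uv}(\tau_{uv})\subseteq\tau'_{uv}$, and $\mathrm{vol}(\tau_{uv})=\mathrm{vol}(\tau'_{uv})$ since $\phi_{uv}$ is an isometry between them; with the change of variables formula on each piece this forces $\int_{\tau'_{uv}}\#\psi_{uv}^{-1}(y)\,dy=\mathrm{vol}(\tau_{uv})=\mathrm{vol}(\tau'_{uv})$, so $\psi_{uv}$ is surjective up to a null set and has exactly one preimage for a.e.\ $y$. Finally, because $\det D\Psi$ has \emph{constant} sign, $\psi_{uv}$ cannot fold across the common facet of two pieces (a short computation comparing the two affine pieces rules out a coincidence $\psi_{uv}(p)=\psi_{uv}(q)$ with $p,q$ on opposite sides of a facet), so $\psi_{uv}$ is a local homeomorphism on the interior of $\tau_{uv}$; combined with a.e.\ injectivity this yields genuine injectivity, and then the image, being closed and of full measure in $\tau'_{uv}$, equals $\tau'_{uv}$. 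Thus $\psi_{uv}$ is a bijection, and having unit Jacobian on each piece it is measure preserving. (One could instead bypass the topology by writing down an explicit piecewise‑affine inverse using the analogous construction with the roles of $r$ and $s$ exchanged.) I expect the main obstacle to be the second step: keeping the combinatorics of the Skorohod excursions straight so as to recognise the $A\times A$ block as minus a single‑cycle permutation matrix — once that is in place the remaining assertions follow by soft arguments.
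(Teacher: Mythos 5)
Your approach is genuinely different from the paper's, and the core computation is correct, but the assembly of the pieces into a bijectivity statement is where your sketch has a real gap.

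What you do differently: you attack the Jacobian head‑on, decomposing $\tau_{uv}$ by the active set $A=\{i:l_i>0\}$ and computing $\det D\Psi$ on each piece via the Skorohod excursion identity $l_{i_{k+1}}=-\sum_{i_k\le m<i_{k+1}}z_m$. That computation is right: the $A\times A$ block of $D\Psi$ is minus a single‑cycle permutation, so $\det D\Psi\equiv -1$, and the left eigenvector relation $\mathbf{1}^{T}D\Psi=-\mathbf{1}^{T}$ peels off a factor $-1$ transversally, leaving Jacobian $+1$ for the induced map between the two parallel $(n-1)$‑planes. (I checked this against $n=3$, $A=\{1,3\}$.) This is actually \emph{finer} information than the paper establishes, since the paper only concludes $|J|=1$ a.e. The paper instead proves bijectivity first, by exhibiting an explicit inverse: it shows that $(u,x,v)\mapsto(v^{\dag},y^{\dag},u^{\dag})$, with $x^{\dag}_i=-x_{-i}$, is an involution, so $x^{\dag}=\psi_{v^{\dag}u^{\dag}}(y^{\dag})$; measure preservation then falls out for free, because both $\psi_{uv}$ and its inverse are piecewise linear with integer Jacobians, and two integers multiplying to $1$ are $\pm1$. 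Note that this is not the same as ``exchanging the roles of $r$ and $s$'' as your parenthetical suggests — that swap would break the hypothesis $s>r$; the dagger conjugation is what makes the reversal land back in the allowed regime.

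The gap: from $\int_{\tau'_{uv}}\#\psi_{uv}^{-1}(y)\,dy=\mathrm{vol}(\tau'_{uv})$ you conclude ``surjective up to a null set and exactly one preimage a.e.,'' but that inference is false on its own — the integral being $\mathrm{vol}$ is consistent with $\#\psi^{-1}$ being $0$ on half the polytope and $2$ on the other half. You do gesture at the right repair (constant‑sign Jacobian prevents folding, hence $\psi_{uv}$ is a local homeomorphism on the interior), but then you still need to run a degree‑ or covering‑space argument: a proper local homeomorphism between connected open sets has a well‑defined degree $k$, the volume identity forces $k\cdot\mathrm{vol}(\mathrm{image})=\mathrm{vol}(\tau'_{uv})$, so $k=1$ and the image has full measure; then compactness of the image plus the fact that $\tau'_{uv}$ is the closure of its interior gives surjectivity. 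None of that is hard, but as written your step ``combined with a.e.\ injectivity this yields genuine injectivity'' skips it. The paper sidesteps the entire issue by producing the inverse explicitly, which is why its proof is shorter; the price it pays is that it only gets $|J|\in\{\pm1\}$ rather than pinning down the sign as your computation does.
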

\begin{proof}
 For $x \in A_n$, extended as before to a sequence $(x_i ; i \in{\mathbf 
Z}$, let $x^\dag$ be defined by $x^\dag_i=-x_{-i}$ for $ i \in {\mathbf 
Z}$. Observe that if $x \prec_r y$, then $y^\dag \prec_r x^\dag$. For 
$u,v,x$ and $y$ as in the previous proposition we will show that the 
application
\[
(u,x,v) \mapsto (v^\dag, y^\dag, u^\dag)
\]
is an involution, and this implies in particular that $\psi_{u,v}$ is 
invertible with  $x^\dag=\psi_{v^\dag u^\dag}(y^\dag)$. To this end first 
note that
$ v^\dag \prec_s y^\dag \prec_r u^\dag$, and hence it is meaningful to 
apply $\psi_{v^\dag u^\dag}$ to $y^\dag$. We must proof that the resulting 
conguration $\tilde{x}$ say, is equal to $x^\dag$. 

We have $ \tilde{x}_i = y^{\dag}_{i}-\tilde{l}_i$, where 
$(\tilde{r},\tilde{l})$ solves the Skorohod problem with data
\[
\tilde{z}_i=  u^\dag_i-y^\dag_i-y^\dag_{i+1}+v^\dag_{i+1}.
\]
Since $\tilde{x}_i=y^{\dag}_{i}-\tilde{l}_i= -x_{-i}+{l}_{-i}-\tilde{l}_i$  
verifying  that $\tilde{x}=x^\dag$ boils down to checking that 
$\tilde{l}_i=l_{-i}$. For this it is suffices, by the uniqueness property 
of solutions to the Skorohod problem, to confirm that   $r^\prime_i= 
r_{-i}$ and  $l^\prime_i=l_{-i}$  solve the Skorohod problem with data 
$\tilde{z}$. Now $r^\prime$ and $l^\prime$ are non-negative and satisfy 
$l^\prime_i>0 \implies r^\prime_i=0$ because $r$ and $l$ have these 
properties. We just have to compute 
\begin{align*}
r^\prime_{i}+ \tilde{z}_{i}+ 
l^\prime_{i+1}&=r_{-i}+u^\dag_i-y^\dag_i-y^\dag_{i+1}+v^\dag_{i+1}+l_{-(i+1)} 
\\
&=r_{-i}-u_{-i}+y_{-i}+y_{-(i+1)}-v_{-(i+1)} +l_{-(i+1)} \\
&=r_{-i}-u_{-i}+x_{-i}-l_{-i}+x_{-(i+1)}-l_{-(i+1)}-v_{-(i+1)}+l_{-(i+1)} 
\\
&=r_{-i}- z_{-(i+1)}-l_{-i}\\
&= r_{-(i+1)}=r^\prime_{i+1}.
\end{align*}
This proves the involution property.

It remains to verify the measure-preserving property. The construction of 
$\psi_{uv}$ is such that it is evident that it is a piecewise linear 
mapping, and that its Jacobian is almost everywhere integer valued. Since 
the same applies to the inverse map constructed from  $\psi_{v^\dag 
u^\dag}$ we conclude the 
Jacobian is almost everywhere $\pm 1$ valued.
\end{proof}

Finally the following propostion follows by exactly the same argument as 
Proposition \ref{push}.

\begin{prop}
\label{block} Suppose that $s>r$.
Let $(X(k); k \geq 0)$ be an $r$-interlacing random walk, starting from 
$X(0)$ having the distribution $q_s(y,dx)$  for some given $y \in A_n$.
Let the process $(Y(k); k \geq 0)$ be given by  $Y(0)=y$ and  
\[
Y(k+1)=\psi_{Y(k), X(k+1)} (X(k)). \text { for }  k \geq 0.
\]
Then   $(Y(k); k \geq 0)$ is distributed as an $r$-interlacing random walk 
starting from $y$.
\end{prop}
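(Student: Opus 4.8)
The plan is to repeat, essentially verbatim, the proof of Proposition~\ref{push}, with the blocking maps $\psi_{u,v}$ replacing the pushing maps $\phi_{u,v}$; the only genuinely new ingredient is the preceding proposition, which says that $\psi_{u,v}\colon\tau_{u,v}\to\tau'_{u,v}$ is a measure-preserving bijection. As there, I would prove by induction on $m$ that the joint law of $\bigl(Y(1),\dots,Y(m),X(m)\bigr)$ equals
$$q_r(y,dy(1))\cdots q_r(y(m-1),dy(m))\,q_s(y(m),dx(m)),$$
the case $m=0$ being the hypothesis $X(0)\sim q_s(y,\cdot)$.

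For the inductive step I would use, exactly as in Proposition~\ref{push}, that $Y(1),\dots,Y(m)$ are deterministic functions of $X(0),\dots,X(m)$ (immediate from $Y(k+1)=\psi_{Y(k),X(k+1)}(X(k))$), so that appending the $q_r$-step of $X$ gives the joint law of $\bigl(Y(1),\dots,Y(m),X(m),X(m+1)\bigr)$. In the product $q_s(y(m),dx(m))\,q_r(x(m),dx(m+1))$ the Vandermonde factor $\Delta(\pi(x(m)))$ cancels, leaving a density in $x(m)$ that is constant on $\tau_{y(m),x(m+1)}$; hence the marginal of $\bigl(Y(1),\dots,Y(m),X(m+1)\bigr)$ is the expression above with $q_s(y(m),dx(m))q_r(x(m),dx(m+1))$ replaced by $(q_sq_r)(y(m),dx(m+1))=(q_rq_s)(y(m),dx(m+1))$, and the conditional law of $X(m)$ given these variables is uniform on $\tau_{Y(m),X(m+1)}$. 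Pushing this conditional uniform law forward under the measure-preserving bijection $\psi_{Y(m),X(m+1)}$ makes $Y(m+1)$ conditionally uniform on $\tau'_{Y(m),X(m+1)}$, and running the same cancellation in reverse identifies $(q_rq_s)(y(m),dx(m+1))$ coupled with this conditionally uniform $Y(m+1)$ as $q_r(y(m),dy(m+1))\,q_s(y(m+1),dx(m+1))$. This is the inductive claim for $m+1$; marginalizing out $X(m)$ shows that $\bigl(Y(1),\dots,Y(m)\bigr)$ has the law of the first $m$ steps of an $r$-interlacing random walk from $y$, for every $m$, which is the assertion.

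The one point requiring a little more care than in Proposition~\ref{push} is to check that $\psi_{Y(k),X(k+1)}$ is always legitimately applicable, which needs $Y(k)\preceq_s X(k)\preceq_r X(k+1)$ together with the standing hypothesis $s>r$ (not needed for $\phi$). These interlacings are preserved along the induction: $X(k)\preceq_r X(k+1)$ because $X$ is $r$-interlacing, and the output property of $\psi$ gives $Y(k)\preceq_r Y(k+1)\preceq_s X(k+1)$, so that $Y(k+1)\preceq_s X(k+1)$ persists while simultaneously $Y(k+1)$ is a valid $r$-step from $Y(k)$; the base case $Y(0)=y\preceq_s X(0)$ is the hypothesis on $X(0)$. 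I do not expect any real obstacle beyond this bookkeeping: the substantive fact — that $\psi_{u,v}$ is a volume-preserving bijection, via the $x\mapsto x^\dagger$ involution — has already been established, and the remainder is the identical measure-disintegration argument used for Proposition~\ref{push}.
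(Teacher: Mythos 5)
Your proposal is correct and follows the same route the paper takes: the paper's proof of Proposition~\ref{block} is literally the one-sentence remark that it ``follows by exactly the same argument as Proposition~\ref{push},'' and you have accurately written that argument out, substituting the measure-preserving bijection $\psi_{u,v}$ for $\phi_{u,v}$ and correctly noting the extra bookkeeping (the standing hypothesis $s>r$ and the preservation of $Y(k)\preceq_s X(k)$ along the induction) needed for $\psi$ to be applicable at each step.
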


Let us illustrate this coupling in a similar fashion to before.

\setlength{\unitlength}{0.5cm}
\begin{picture}(14,6)
\put(1,2){\line(1,0){22}}
\put(1,4){\line(1,0){22}}
\put(2,1){\text{$y^\prime_{i}$}}
\put(2,5){\text{$y_{i}$}}
\put(2,2){\circle{0.6}}
\put(2,4){\circle{0.6}}
\put(2,4){\vector(0,-1){1.5}}
\put(6,1){\text{$x^\prime_{i}$}}
\put(4,5){\text{$x_{i}$}}
\put(6,2){\circle*{0.6}}
\put(4,4){\circle*{0.6}}
\put(4,4){\vector(1,-1){1.7}}
\put(9.5,1){\text{$y^\prime_{i+1}$}}
\put(8,5){\text{$y_{i+1}$}}
\put(10,2){\circle{0.6}}
\put(8,4){\circle{0.6}}
\put(8,4){\vector(1,-1){1.7}}
\put(11,1){\text{$x^\prime_{i+1}$}}
\put(11,5){\text{$x_{i+1}$}}
\put(11,2){\circle*{0.6}}
\put(11,4){\circle*{0.6}}
\put(11,4){\vector(0,-2){1.5}}
\put(15,1){\text{$y^\prime_{i+2}$}}
\put(15,5){\text{$y_{i+2}$}}
\put(15,2){\circle{0.6}}
\put(15,4){\circle{0.6}}
\put(15,4){\vector(0,-2){1.5}}
\put(20,1){\text{$x^\prime_{i+2}$}}
\put(20,5){\text{$x_{i+2}$}}
\put(20,2){\circle*{0.6}}
\put(20,4){\circle*{0.6}}
\put(20,4){\vector(0,-2){1.5}}
\end{picture}

In the  simplest case shown above, the  $(i+1)$th $y$ particle advances by 
the same amount as the $i$th $x$-particle.  However in the event that this 
would result in it passing the position $x_i$, then it is blocked at that 
point, and the unused part of the increment $(x^\prime_i-x_i)$ is passed 
to the $(i+2)$th $y$ particle as is shown beneath.

\begin{picture}(14,6)
\put(1,2){\line(1,0){22}}
\put(1,4){\line(1,0){22}}
\put(2,1){\text{$y^\prime_{i}$}}
\put(2,5){\text{$y_{i}$}}
\put(2,2){\circle{0.6}}
\put(2,4){\circle{0.6}}
\put(2,4){\vector(0,-2){1.5}}
\put(8,1){\text{$x^\prime_{i}$}}
\put(4,5){\text{$x_{i}$}}
\put(8,2){\circle*{0.6}}
\put(4,4){\circle*{0.6}}
\put(4,4){\vector(2,-1){3.5}}
\put(9.5,1){\text{$y^\prime_{i+1}=x^\prime_{i+1}$}}
\put(8,5){\text{$y_{i+1}$}}
\put(11,2){\circle*{0.6}}
\put(10.8,2){\circle{0.6}}
\put(8,4){\circle{0.6}}
\put(8,4){\vector(3,-2){2.4}}
\put(11,5){\text{$x_{i+1}$}}
\put(11,4){\circle*{0.6}}
\put(11,4){\vector(0,-2){1.5}}
\put(15,1){\text{$y^\prime_{i+2}$}}
\put(15,5){\text{$y_{i+2}$}}
\put(16,2){\circle{0.6}}
\put(15,4){\circle{0.6}}
\put(15,4){\vector(1,-2){0.8}}
\put(20,1){\text{$x^\prime_{i+2}$}}
\put(20,5){\text{$x_{i+2}$}}
\put(20,2){\circle*{0.6}}
\put(20,4){\circle*{0.6}}
\put(20,4){\vector(0,-2){1.5}}
\end{picture}

We can also consider discrete analogues of these constructions. Set 
$A_n^N= A_n\cap \Z^n/(2\pi N)$. Then, for $r\in \{1/(2\pi N), 2/(2\pi N), 
\ldots, (N-1)/(2\pi N)\}$ we may define  a Markovian transition  kernel 
$q^N_r$ on $A^N_n$ as follows.
Define 
\[
l^N_r(x,x^\prime)= \begin{cases} 1 & \text {if } z \preceq_r x^\prime, \\
0 &\text{otherwise.}
\end{cases}
\]
and let 
\begin{equation}
h^N(x)=\prod_{1 \leq l <m\leq n} |e^{i\bar{x}_l}-e^{i\bar{x}_m}|,
\end{equation}
where $\bar{x}_j= \frac{N}{N+n}(x_j+j/(2\pi N))$.
Then, as we shall see below, $l^N_r$, as a kernel on $A^N_n$, admits $h^N$ 
as a strictly positive  eigenfunction, and consequently  
\begin{equation}
q^N_r(x,x^\prime)=\frac{1}{\gamma^N_r} \frac{h^N(x^\prime)}{h^N(x)} l_r^N(x,x^\prime),
\end{equation}
defines a Markovian transition kernel, where $\gamma^N_r>0$ denotes the 
Perron-Frobenius eigenvalue of $l^N_r$.  We will call a Markov chain 
$(X(k); k \geq 0)$ with these transition probabilities, an  
$r$-interlacing random walk on on $A^N_n$.

In the special case $r=1/(2\pi N)$ the  $r$-interlacing random walk on on 
$A^N_n$ is closely related to non-coliding random walks on the circle, as 
considered in \cite{kor02} for instance. Specifically if $(X(k); k \geq 
0)$ is the walk on $A^N_n$ then the process given by
\[
\bigl( e^{i\bar{X}_1(k)}, e^{i\bar{X}_2(k)}, \ldots, 
e^{i\bar{X}_n(k)}\bigr),
\]
where $\bar{X}_j(k)= \frac{N}{N+n}(X_j(k)+j/(2\pi N))$,
gives a process of $n$ non-coliding random walks on the circle 
$\{e^{ik/(2\pi(N+n)}; k=0, 1, 2,\ldots ,N+n-1\}$.

Fix $r$ and $s \in \{1/(2\pi N), 2/(2\pi N), \ldots, (N-1)/(2\pi N)\}$.
For $u,v \in A^N_n$ we may consider the two sets $\tau^N_{u,v}=\tau_{u,v} 
\cap  \Z^n/(2\pi N)$ and $\tau^{N\prime}_{u,v}=\tau^\prime_{u,v} \cap  
\Z^n/(2\pi N)$, where $\tau_{u,v}$ and $\tau^\prime_{u,v}$ are the 
polygons   defined previously. The map $\phi_{u,v}$ carries $\Z^n/(2\pi 
N)$ into $\Z^n/(2\pi N)$, and  since we know it to be an isometry between 
$\tau_{u,v}$ and $\tau_{u,v}^\prime$ it must  therefore restrict to  a 
bijection between $\tau^N_{u,v}$  and $\tau^{N\prime}_{u,v}$. In 
particular the cardinalities of these two sets are equal.  Since $ 
l^N_sl^N_r(u,v)=|\tau^N_{u,v}|=|\tau_{u,v}^{N\prime}|=l^N_r l^N_s(u,v) $ 
we deduce that $l^N_r$ and $l^N_s$, and so $q^N_r$ and $q^N_s$ commute. 
This is useful in verifying our previous assertion that $h^N$ is a 
eigenfunction of $l^N_r$. The case $r=1$ may be easily deduced by 
direct calculation, see \cite{kor02} for a similar calculation. Moreover 
the statespace $A^N_n$ is irreducible for $l_r^N$ for $r=1$ (though not 
in general), and thus the Perron-Frobenius eigenfunction of $l^N_1$ is 
unique up to multiplication by scalars. Consequently it follows from the 
commutation relation that $h^N$ is an eigenfunction of $l^N_r$ for every 
$r$.

Using the fact that $\phi_{u,v}$ is a bijection between $\tau^N_{u,v}$  
and $\tau^{N\prime}_{u,v}$, we see that, substuting $\tau^N_{u,v}$ for 
$\tau_{u,v}$,  $q^N_r$ for $q_r$ and so on, the proof of Proposition 
\ref{push} applies verbatum in the case that $X$ is an $r$ interlacing 
random walk on $A^N_n$ rather than $A_n$. Similarly  $\psi_{u,v}$ is a 
bijection between $\tau^N_{u,v}$  and $\tau^{N\prime}_{u,v}$, and so 
Proposition \ref{block} holds also in the discrete case.

We can extend the above constructions, by Kolmogorov consistency,
to define a Markov chain $(\{X^{(m)}(k),\ m\in\Z\},\ k\in\Z)$ on the state space
$$\{ x\in A_n^\Z:\ x^{(m)} \preceq_r x^{(m+1)},\ m\in\Z\}$$
with the following properties:
\begin{enumerate}
\item For each $m\in\Z$, $(X^{(m)}(k),k\in\Z)$ is an $s$-interlacing random walk
\item For each $k\in\Z$, $(X^{(m)}(k),m\in\Z)$ is an $r$-interlacing random walk 
\item For each $m,k\in\Z$, 
$$X^{(m)}(k+1)=\phi_{X^{(m+1)}(k),X^{(m)}(k+1)}(X^{(m)}(k)).$$
\end{enumerate}
In the above, we can replace $\phi$ by $\psi$, and also consider the discrete versions.
It is interesting to remark on the non-colliding random walk case,
that is, the discrete model on $A^N_n$ with $r=s=1/2\pi N$.  In this case, the evolution
of the above Markov chain is completely deterministic.  Indeed, by ergodicity of the
non-colliding random walk, for each $k\in\Z$, with probability one, there exist infinitely 
many $m$ such that $X^{(m)}(k)\in \{ ((1+c)/2\pi N,\ldots,(n+c)/2\pi N),\ c\in\Z \}$ and for 
these $m$ there is only one allowable transition, namely to $X^{(m)}_j(k+1)=X^{(m)}_j(k)$
for $j\le n-1$ and $X^{(m)}_n(k+1)=X^{(m)}_n(k)+1/2\pi N$.  By property (3) above,
this determines $X^{(m)}(k+1)$ for all $m\in\Z$.

\section{Couplings of interlaced  Brownian motions}

In this section we describe  a  Brownian motion construction that can 
considered  as  a scaling limit
of  the coupled random walks  of the previous section.  

The Laplacian on $A_n$ with Dirichlet boundary conditions admits a 
unique non-negative eigenfunction, the function $h$ defined previously at 
\eqref{h} which corresponds to the greatest eigenvalue $\lambda_0=-n(n-1)(n+1)/12$. A 
$h$-Brownian motion on $A_n$, is a diffusion  with transition densities 
$Q_t$ given by
\begin{equation}
Q_t(x,x^\prime)= e^{-\lambda_0 t}\frac{h(x^\prime)}{h(x)}  
Q^0_t(x,x^\prime),
\end{equation}
where $Q^0_t$ are the transition densities of standard $n$-dimensional 
Brownian motion killed on exiting $A_n$, given explicitly by a continuous
analogue of the Gessel-Zeilberger formula, \cite{gz,hw}. 
If $(X(k); k \geq 0)$ is an $h$-Brownian motion in $A_n$ then
$\bigl( e^{iX_1(t)}, e^{iX_2(t)}, \ldots, 
e^{iX_n(t)}\bigr)$
gives a process of $n$ non-coliding Brownian motions on the circle, the 
same as arises as the eigenvalue process of Brownian motion in the unitary 
group, see \cite{dys62b}.

We wish to construct a bivariate process $(X,Y)$ with each of $X$ and $Y$ 
distributed as $h$-Brownian motions in $A_n$, and  with $Y(t) \preceq_s 
X(t)$, for some fixed $s \in (0,2\pi)$.
The dynamics we have in mind is based on the map $\phi$ of the previous 
section. $Y$ should be  deterministically constructed from its starting 
point and the trajectory of $X$, with $Y_i(t)$ tracking $X_i(t)$  except 
when this would cause the   interlacing constraint to be broken. In the 
following we set $\theta_{0}(t)= \theta_n(t)$.
\begin{prop}
\label{yfromx}
Given a continuous path $\bigl(x(t); t \geq 0 \bigr)$ in $A_n$, and a 
point $y(0)\in A_n$ such that $y(0) \preceq_s x(0)$, there exist unique 
continuous paths $\bigl(y(t); t \geq 0 \bigr)$ in $A_n$ and 
$\bigl(\theta(t); t \geq 0 \bigr)$ in ${\mathbf R}^n$ such that
\begin{itemize}
\item[(i)] $y_i(t) \preceq_s x_i(t)$  for all $t\geq 0$;
\item[(ii)]  $y_i(t)-y_i(0)= x_i(t)- x_i(0)+ 
\theta_{i-1}(t)-\theta_{i}(t)$;
\item[(iii)] for $i=1,2,\ldots n $, the real-valued process $\theta_i(t)$ 
starts from $\theta_i(0)=0$, is increasing, and the measure $d\theta_i(t)$ 
is supported on the set $\{ t: y_{i+1}(t)= x_{i}(t)\}$. 
\end{itemize}
\end{prop}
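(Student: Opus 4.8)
The plan is to recognise the system (i)--(iii) as a multidimensional Skorohod problem with oblique reflection, and to solve it by a fixed-point/iteration argument, exploiting the monotone (cyclic) structure of the constraints $y_{i+1}(t)=x_i(t)$. First I would reformulate: by (ii), knowing $\t$ determines $y$ and vice versa, so it suffices to construct $\t$. The driving path is $z_i(t):=x_i(t)-x_i(0)+y_i(0)$; then (ii) reads $y_i(t)=z_i(t)+\t_{i-1}(t)-\t_i(t)$, and the constraint in (iii) becomes: $\t_i$ is nondecreasing, $\t_i(0)=0$, and $d\t_i$ is carried by $\{t: x_i(t)-y_{i+1}(t)=0\}$, i.e. by $\{t: z_i(t)-\t_{i-1}(t)+\t_i(t)+ (x_i(0)-y_i(0)) - (\text{gap term})=0\}$. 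The cleanest bookkeeping is to introduce the $n$ gap processes $g_i(t):=x_i(t)-y_{i+1}(t)\ge 0$ (with the convention $y_{n+1}=y_1+2\pi$, $x_{n+1}=x_1+2\pi$); using (ii) one writes $dg_i = (\text{increment of }x_i - x_{i+1}) + d\t_{i+1}-d\t_{i-1}$ — wait, one must be careful to get the indices of the reflecting pushes right, but in any case $g_i$ satisfies a linear equation driven by the continuous path $x$, reflected to stay in $[0,\infty)$ with pushing term $d\t_i$ supported on $\{g_i=0\}$. This is exactly the setting of a Skorohod problem with a reflection matrix of the form $I$ plus a cyclic nearest-neighbour perturbation.

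The key steps, in order. (1) Reduce to constructing the nondecreasing vector $\t$, as above, and record the linear relation between $(y,g)$ and $(z,\t)$. (2) Prove existence by Picard iteration: set $\t^{(0)}\equiv 0$ and, given $\t^{(m)}$, let $\t^{(m+1)}$ solve the decoupled one-dimensional Skorohod problems obtained by freezing the ``other'' pushing terms at step $m$; each one-dimensional step is the classical Skorohod reflection (explicit running-maximum formula), so $\t^{(m+1)}$ is well-defined, nondecreasing, continuous and vanishes at $0$. (3) Show the iteration converges: on a fixed time interval $[0,T]$ the map is a contraction (or at least an eventual contraction) in the sup norm because each coordinate push at time $t$ depends on the others only through their values up to time $t$ and with a coefficient that can be made $<1$ by the usual $\int_0^t$ Gronwall-type estimate; pass to the limit to get a solution of (i)--(iii). (4) Prove uniqueness: if $(\t,y)$ and $(\tilde\t,\tilde y)$ are two solutions, form $g_i-\tilde g_i$ and use the standard Skorohod monotonicity inequality (for each $i$, $\int_0^t (g_i-\tilde g_i)\,d(\t_i-\tilde\t_i)\le 0$, which follows from the support property in (iii)) to get $\sum_i (g_i(t)-\tilde g_i(t))^2 \le 0$ after summing and cancelling the telescoping $x$-terms, hence $g=\tilde g$, hence $\t=\tilde\t$ and $y=\tilde y$. (5) Finally check the endpoint/periodicity convention ($\t_0=\t_n$, $x_0=x_n$, $y_0=y_n$) is consistent with the cyclic reflection structure so that the constructed $y$ genuinely lies in $A_n$ — here one uses that $x(t)\in A_n$ keeps the total ``room'' $\sum g_i$ bounded, and that $y(0)\preceq_s x(0)$ gives the right initialisation $g_i(0)\ge0$.

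I expect the main obstacle to be step (3)/(4): the reflection is \emph{oblique} (a push in $\t_i$ moves two gaps $g_{i-1}$ and $g_i$, cyclically coupled), so one is not in the trivial orthogonal-reflection case and must either verify a completely-$\mathcal{S}$/Harrison--Reiman type condition for the cyclic matrix $I+R$ or, more in the spirit of this paper, give a direct argument. The cyclic structure actually helps: the matrix is an $M$-matrix-type perturbation and the relevant Skorohod-map Lipschitz estimate goes through, but writing it cleanly — and in particular getting the index conventions in $dg_i = d(z_i-z_{i+1}) + d\t_i \cdot(\text{stuff}) $ exactly right around the wrap-around index $n\to 1$ — is where the real care is needed. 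A secondary, more bookkeeping-level obstacle is verifying that the limit $y$ stays in the \emph{closed} alcove $A_n$ (not merely that consecutive gaps are nonnegative), but this follows since $y_i(t)\le x_i(t)\le y_{i+1}(t)$ by construction, i.e. $y(t)\preceq_s x(t)$ forces $y(t)\in A_n$ once $x(t)\in A_n$.
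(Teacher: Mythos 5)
Your reformulation as a multi‑dimensional Skorohod problem in the gap variables is in the right spirit (modulo a sign: with the paper's convention $y\preceq_s x$ means $y_i\le x_i\le y_{i+1}$, so the nonnegative gap is $g_i=y_{i+1}-x_i$, not $x_i-y_{i+1}$), and the paper does reduce to a Skorohod lemma, though phrased in "integrated gap" coordinates $v$ with $v_{i+1}-v_i = y_{i+1}-x_i$ so that the reflection acts on the bounded alcove $A_n(2\pi-s)$ rather than on the orthant. But the two steps you flag as "real care needed" are exactly where your plan breaks, and I don't think the fixes you sketch can be made to work.

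First, the existence argument. You propose Picard iteration with a Gronwall contraction, and you suggest as a fallback verifying a Harrison--Reiman / completely-$\mathcal{S}$ condition for the reflection matrix. Computing from (ii)--(iii), the gap dynamics are $dg_i = d(x_{i+1}-x_i) + d\theta_i - d\theta_{i+1}$ with $d\theta_i$ carried by $\{g_i=0\}$, so the reflection matrix is $R=I-P$ with $P$ the cyclic shift. This matrix is \emph{singular}: $\mathbf{1}^{T}R=0$, hence there is no $w>0$ with $Rw>0$, so $R$ is not even an $\mathcal{S}$-matrix, let alone completely-$\mathcal{S}$, and the spectral radius of the off-diagonal part is exactly $1$. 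The standard contraction/Lipschitz estimates for the oblique Skorohod map therefore do not apply, and a naive Picard iteration around the cyclic chain has no reason to converge. Second, the uniqueness argument. Summing $\tfrac12 d(g_i-\tilde g_i)^2$ over $i$ and using the support property gives a sign-controlled term $\sum_i (g_i-\tilde g_i)\,d(\theta_i-\tilde\theta_i)\le 0$, but also a cross term $-\sum_i (g_{i-1}-\tilde g_{i-1})\,d(\theta_i-\tilde\theta_i)$, which does \emph{not} have a sign: $d\theta_i$ lives on $\{g_i=0\}$, which says nothing about $g_{i-1}$. This cross term is the signature of the oblique (non-normal) reflection and it does not telescope away; the "standard Skorohod monotonicity" you invoke only closes the argument for normal reflection.

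The paper's resolution supplies exactly the ingredient your plan is missing: the domain is not an orthant but a compact simplex, since $\sum_i g_i = 2\pi-s$ is conserved, so at most $n-1$ of the $n$ faces can be active at once. Using uniform continuity of $x$ on $[0,T]$ one picks $\epsilon>0$ small enough that, over any subinterval of length $\epsilon$, some index $k$ has $g_k$ staying strictly positive; then $\theta_k\equiv 0$ there, the cyclic chain opens into a linear chain, and one can apply the classical one-dimensional Skorohod map successively (the restricted reflection matrix becomes $I$ minus a nilpotent shift, whose Neumann series is finite, so existence, uniqueness, and continuity all follow). Patching over consecutive $\epsilon$-intervals gives the Lemma, and Proposition \ref{yfromx} is then read off by the change of variables $y_i(t)=y_i(0)+\bigl(x_i(t)-x_i(0)\bigr)+\theta_{i-1}(t)-\theta_i(t)$, with the remaining inequality $y_i\le x_i$ obtained by a short contradiction argument using $x(t)\in A_n$. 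Without this localization-in-time step your existence and uniqueness arguments both have genuine gaps.
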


Our principal tool in proving this proposition is another variant of the  
Skorohod reflection lemma. For $l>0$, let $A_n(l)=\bigl\{ x \in {\mathbf 
R}^n: x_1\leq x_2 \leq  \ldots  \leq x_n\leq x_1+ l\bigr\}
$. We adapt our usual  convention,  letting  $x_{n+1}=x_1+l$ and 
$x_0=x_n-l$. 
\begin{lem}
\label{skorohod2}
Given a continuous path $\bigl(u(t); t \geq 0 \bigr)$ in ${\mathbf 
R}^n$ starting from $u(0)=0$, and a point $v(0) \in {A}_n(l)$ then there 
exists a unique pair of ${\mathbf R}^n$-valued, continuous  paths $(v, 
\theta)$  starting from $(v(0),0)$ with
\begin{itemize}
\item[(i)]
$v(t) \in  {A}_n(l)$  for every $t \geq 0$,
\item[(ii)] each component $\theta_i(t)$  increasing with the measure 
$d\theta_i(t)$ supported on the set $\{ t: v_i(t)=v_{i+1}(t)\}$,
\item[(iii)]  $v_i(t)=v_i(0)+ u_i(t)- \theta_i(t).$
\end{itemize}
\end{lem}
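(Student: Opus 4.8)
The statement to prove is Lemma~\ref{skorohod2}, a Skorohod-type reflection lemma on the set $A_n(l)$ with the cyclic convention $x_{n+1} = x_1 + l$, $x_0 = x_n - l$. The natural approach is to reduce it to a known Skorohod problem. Writing $w_i = v_{i+1} - v_i$ for $i = 1, \ldots, n$ (indices mod $n$, so that $\sum_i w_i = l$), the constraint $v \in A_n(l)$ becomes $w_i \geq 0$ for all $i$, and the driving path becomes $\zeta_i(t) = u_{i+1}(t) - u_i(t)$. The reflection terms enter as $\theta_i$ pushing at the face $\{v_i = v_{i+1}\} = \{w_i = 0\}$. This is precisely a Skorohod reflection problem for the process $w$ in the orthant-like region $\{w \geq 0\} \cap \{\sum w_i = l\}$, i.e.\ in a simplex, with reflection directions given by the (cyclic) $A_{n-1}$-type structure. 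Such reflection problems in simplices/Weyl chambers with oblique reflection are classical; the relevant existence-and-uniqueness result is, for instance, the one underlying non-colliding diffusions and can be found in the Skorohod-problem literature (e.g.\ the deterministic pathwise construction by iteration). So the first step is to set up this change of variables carefully, check that the convention $x_{n+1} = x_1 + l$ makes $\sum_i \zeta_i(t) = 0$ identically, hence that $\sum_i w_i(t) = l$ is automatically preserved, and recast (i)--(iii) as the standard Skorohod conditions for $w$.

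**Construction.** For existence I would build the solution by Picard-type iteration / a fixed-point argument on path space. Define a map $\Gamma$ that takes a candidate vector of increasing functions $\theta = (\theta_1, \ldots, \theta_n)$, forms $v_i(t) = v_i(0) + u_i(t) - \theta_i(t)$, and then returns the one-dimensional Skorohod regulators of the $n$ one-dimensional problems forcing $v_{i+1} - v_i \geq 0$. One shows $\Gamma$ is a contraction (or monotone, using the comparison/monotonicity built into the one-dimensional Skorohod map) on a suitable interval $[0, T]$, then iterates over successive intervals to cover $[0,\infty)$. Alternatively, and perhaps cleaner given that the paper elsewhere uses the discrete Skorohod lemma (the periodic version stated just before Proposition~\ref{block}), one can obtain the continuous lemma as a scaling limit of that periodic discrete Skorohod lemma; but I would favour the direct pathwise construction since it gives uniqueness simultaneously.

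**Uniqueness.** Uniqueness follows from the Lipschitz property of the Skorohod map: if $(v, \theta)$ and $(\tilde v, \tilde\theta)$ are two solutions with the same data, one derives a Gronwall-type inequality for $\sup_{s \leq t} \|v(s) - \tilde v(s)\|$ using that each component is governed by a one-dimensional Skorohod reflection, whose solution map is Lipschitz (indeed $1$-Lipschitz in the right norm) in the driving path. A mild subtlety is that the reflection at face $i$ of $w$ affects both $v_i$ and $v_{i+1}$, so the coupling between components must be tracked; the cyclic structure means the ``interaction matrix'' is (minus) the cyclic discrete Laplacian, which is positive semidefinite with one-dimensional kernel (the direction $\sum w_i = \text{const}$), exactly the degenerate direction that is frozen — so on the relevant hyperplane the problem is genuinely non-degenerate and the standard estimates apply.

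**Main obstacle.** The real work is not the algebra of the change of variables but verifying that the oblique/cyclic reflection problem on the simplex is well-posed — that the reflection directions satisfy the geometric condition (a completely-$\mathcal{S}$ or Harrison--Reiman type condition) guaranteeing a unique solution. For the $A_{n-1}$ cyclic configuration this amounts to checking a spectral/combinatorial property of the interaction matrix (the cyclic graph Laplacian), which is where I expect to spend the effort; once that is in hand, existence by iteration and uniqueness by Gronwall are routine. I would therefore present the proof as: (1) the change of variables reducing to a simplex reflection problem; (2) a statement and short verification of the well-posedness of that reflection problem, citing the standard pathwise Skorohod theory; (3) translating back to obtain $(y \text{-analogue here: } v, \theta)$ and checking (i)--(iii).
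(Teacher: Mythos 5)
Your plan is structurally different from the paper's, and it contains a genuine gap precisely at the step you flag as ``where I expect to spend the effort.'' You propose to pass to the gap variables $w_i = v_{i+1}-v_i$, obtaining a Skorohod reflection problem in the simplex $\{w\ge 0\}\cap\{\sum w_i = l\}$, and then to invoke the standard Harrison--Reiman / completely-$\mathcal{S}$ machinery to get existence by Picard iteration and uniqueness by a Lipschitz/Gronwall estimate. The difficulty is that the reflection matrix for the cyclic chain does \emph{not} satisfy the usual contraction hypothesis: the regulator $\theta_i$ enters $w_i$ with coefficient $+1$ and $w_{i-1}$ with coefficient $-1$, so the off-diagonal interaction matrix is a cyclic permutation with spectral radius exactly $1$, not strictly less than $1$. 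The one-step Skorohod map is therefore not a contraction, the Picard iteration need not converge, and the Gronwall argument for uniqueness does not close. The linear constraint $\sum w_i = l$ does not by itself repair this; what repairs it is a geometric observation you have not used, namely compactness of the simplex.

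The paper's proof uses exactly that observation and is considerably more elementary. By pigeonhole there is always at least one index $k$ with $v_{k+1}(0)-v_k(0)\ge l/n$. Choosing $\epsilon>0$ so that the oscillation of $u$ over any interval of length $\epsilon$ is below $l/n^2$, one declares $\theta_k \equiv 0$ on $[0,\epsilon]$ and constructs $v_{k-1},v_{k-2},\dots,v_{k+1}$ by applying the \emph{one-dimensional} Skorohod lemma successively around the circle; the bound on the oscillation guarantees the gap at index $k$ never closes on $[0,\epsilon]$, so the circle has effectively been cut into a chain, for which the reflection matrix is strictly triangular (nilpotent) and everything is immediate. Uniqueness on $[0,\epsilon]$ follows from uniqueness of the one-dimensional Skorohod map at each step, and one then iterates over successive $\epsilon$-intervals. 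If you want to pursue your route, you would need to replace the spectral check by this kind of localisation-in-time argument (or an equivalent compactness argument), since the naive fixed-point scheme fails at the boundary of the Harrison--Reiman regime.
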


\begin{proof} 
Fix $T>0$.
The path $u$ is uniformly continuous on $[0,T]$, and so there exists 
$\epsilon>0$ such that $\sum_i |u_i(t)-u_i(s)| < l/n^2$ for all $|s-t| < 
\epsilon$. We  prove existence and uniqueness over the time interval 
$[0,\epsilon]$, and then repeat the argument over consecutive time 
intervals $[\epsilon,2\epsilon]$ etc. 

Observe that there must exist $k \in \{1,2,\ldots, n\}$ such that 
$v_{k+1}(0)-v_{k}(0) \geq l/n$. Set $v_k(t)= v_k(0)+u_k(t)$ and 
$\theta_k(t)=0$ for $t \in [0,\epsilon]$. Then we the usual Skorohod lemma 
applied  sucessively to give $v_{k-1}, v_{k-2}, \ldots, v_1,v_n, \ldots 
v_{k+1}$, in particular we let
\[
\theta_i(t)=\sup_{s \leq t} \bigl( v_{i+1}(t)- u_i(t)-v_i(0)\bigr)^-.
\]
The choice of $\epsilon$ is such that the $v_{k-1}$ so constructed 
satisfies $v_{k}(t) < v_{k+1}(t)$ for all $t \in [0,\epsilon]$, and 
consequently all the desired properties of $v$ and $ \theta$ hold.  For 
uniqueness, we note that any $(v,\theta)$ for which the desired properties 
holds must be equal to the one just constructed, which  follows from the 
uniqueness for the usual Skorohod construction.
\end{proof}

\begin{proof}[Proof of Proposition \ref{yfromx}]
This  is based on applying the Skorohod mapping to the path $u$ specified 
from $x$ by $u_i(t)= x_i(t)-x_i(0)$.

Choose $v(0) \in {A}_n(2\pi-s)$ so that $v_i(0)- v_{i-1}(0)= 
y_i(0)-x_{i-1}(0)$, and let $\bigl(v, \theta\bigr)$ be  given by the 
Skorohod mapping for the domain $A_n(2\pi-s)$ with data $u$ and $v(0)$.  
Then  for $t \geq 0$ define
 \begin{align*}
 \label{yfromx}
 y_i(t)=y_i(0)+u_i(t)+ \theta_{i-1}(t)- \theta_{i}(t). 
\end{align*} 
We claim that 
\begin{itemize}
\item[(i)] $y(t) \in A_n $ with $y(t)\preceq_s x(t)$ for every $t\geq 0$,
\item[(ii)] the measure $d\theta_i(t)$ is supported on the set $\{ t: 
y_{i+1}(t)=x_{i}(t)\}$.
\end{itemize}
Calculate as follows.
\begin{multline*}
y_{i}(t)- x_{i-1}(t)=\bigl(y_i(0)+ u_i(t)+ \theta_{i-1}(t)- 
\theta_{i}(t)\bigr)-\bigl(x_{i-1}(0)+u_{i-1}(t)\bigr)\\
=\bigl( v_{i}(0)+u_{i}(t)-\theta_{i}(t)\bigr)- 
\bigl(v_{i-1}(0)+u_{i-1}(t)-\theta_{i-1}(t)\bigr)  
= v_{i}(t)-v_{i-1}(t).
\end{multline*}
This is valid even if $i=0$ when we adhere to our conventions regarding 
$v_{0}$ etc.
Assertion (ii) above  follows since we see that $\{ t: 
y_i(t)=x_{i-1}(t)\}=\{t:v_i(t)=v_{i-1}(t)\}$, and this latter set carries 
$d\theta_{i-1}$.
Also since  $v_{i}(t)\geq v_{i-1}(t)$ we obtain one part of the 
interlacing condition, namely,
$y_i(t)\geq x_{i-1}(t)$. For the  other part, consider  the equality
\[
x_{i}(t)-y_i(t)= x_i(0)-y_i(0) + \theta_i(t)- \theta_{i-1}(t).
\]
Since this quantity is initially $x_i(0)-y_i(0)> 0$ and decreases only 
when 
$\theta_{i-1}$ increases it follows that if there exists an instant $t_1$ 
for 
which $x_i(t_1)-y_i(t_1)<0$, then there exists another instant, $t_0$, for 
which $x_i(t_0)-y_i(t_0)<0$  and which  belongs to the support of 
$d\theta_{i-1}$. The latter implies that $y_{i}(t_0)=x_{i-1}(t_0)$, and 
thus  
$x_{i}(t_0)<y_i(t_0)=x_{i-1}(t_0)$ which contradicts $x(t_0) \in A_n$. 
This proves existence. Uniqueness follows from  the uniqueness statement 
in 
Proposition \ref{skorohod2}.
\end{proof}

By virtue of this result we may make the following definition. For $y(0) 
\in A_n$,  let $\Gamma_{y(0)}$ be the application which applied to an 
$A_n$-valued path $\bigl(x(t) \geq 0\bigr)$ returns the path $\bigl(y(t); 
t \geq 0 \bigr)$ specified by Propostion \ref{yfromx}. The main result of 
this section is the following.
\begin{prop}
\label{bm}
Let $(X(t); t \geq 0)$ be an $h$-Brownian motion in $A_n$, starting from a 
point
$X(0)$ having the distribution $q_s(y,dx)$  for some given $y=y(0) \in 
A_n$.
Then the process   $Y=\Gamma_{y(0)}( X)$ is distributed as a $h$-Brownian 
motion in $A_n$.
\end{prop}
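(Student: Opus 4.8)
The plan is to obtain Proposition~\ref{bm} as a scaling limit of Proposition~\ref{push}, but to do so cleanly it is better to run a direct argument at the level of semigroups that mirrors the inductive proof of Proposition~\ref{push}. First I would verify the discrete-time skeleton: fix a time step $t>0$ and observe that, by the very same computation as in Proposition~\ref{push}, if $X$ is an $h$-Brownian motion in $A_n$ started from $q_s(y,dx)$ and we set $Y(t)=\Gamma_{y(0)}(X)(t)$, then the joint law of $(Y(t),X(t))$ is $Q_t^h\,q_s$, provided one knows the key commutation relation $Q_t^h q_s = q_s Q_t^h$ together with the ``measure-preserving'' property of the Skorohod map $\Gamma$. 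This reduces the proposition to two ingredients: (a) the commutation of the conditioned Brownian semigroup $(Q_t)$ with the interlacing kernels $q_s$, and (b) the statement that $\Gamma_{y(0)}$, restricted to the fibre of paths $x(\cdot)$ with $x(t)=z$ and a uniformly random configuration compatible with $y$ at earlier times, pushes Wiener measure on $\tau$-type polygons onto Wiener measure on $\tau'$-type polygons.

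For (a), the commutation is essentially already in the paper: $q_s$ is (the lift to $A_n$ of) the kernel $p_r=p_x$ with $x=\{e^{is},1,\ldots,1\}$, whose eigenfunctions are the irreducible characters $\chi_\l$, and $\hat Q_t\in\M$ has the same eigenfunctions, so $p_x$ and $\hat Q_t$ commute as operators on $L_2(C_n,\mu)$; lifting to $A_n$ via $\pi$ and the eigenfunction $h$ of~\eqref{h} transfers the commutation to $q_s$ and $Q_t$. I would spell out that the lift is exactly the Doob $h$-transform used to define $Q_t$, so the intertwining is literal, not just up to conjugation. For (b), the point is that the path-level Skorohod map $\Gamma$ of Proposition~\ref{yfromx} is the continuum analogue of the discrete maps $\phi_{u,v}$: conditionally on $(Y(kt/m),X((k+1)t/m))$ for a mesh $m$, the bridge of $X$ over one mesh interval, given its endpoints $u\preceq_s x\preceq_r v$, is a Brownian bridge whose law is invariant under the affine reflection $\phi_{u,v}$ (since $\phi_{u,v}$ is an isometry of the relevant polytope and $h(\pi(\cdot))$ is constant on fibres only in the limit — here one uses that the Radon--Nikodym factor $h(x')/h(x)$ telescopes along the path and matches on the two polytopes because $\phi_{u,v}$ preserves $\sum(u_i+v_i-x_i)$). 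Passing $m\to\infty$ and invoking the continuity statement in Proposition~\ref{yfromx} identifies $\Gamma$ as the almost-sure limit of the compositions of the $\phi$'s, giving the result.

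Concretely, the steps in order: (1) state and prove the intertwining $Q_t q_s = q_s Q_t$ on $A_n$ from the character-eigenfunction identity~\eqref{efun} and $\hat Q_t\in\M$; (2) for a fixed mesh, apply Proposition~\ref{push} (discrete-time version with step law $Q_{t/m}^h$ in place of $q_r$ — note $q_r$ there was only used through its commutation with $q_s$ and through its being the law of the $X$-increments, both of which hold for $Q^h$) to get that $(Y^{(m)}(t),X(t))\sim Q_t^h q_s$ where $Y^{(m)}$ is built by $m$-fold composition of the maps $\phi$; (3) show $Y^{(m)}(t)\to Y(t)=\Gamma_{y(0)}(X)(t)$ almost surely as $m\to\infty$, using uniform continuity of the Brownian path and the characterization of $\Gamma$ via the Skorohod problem in Lemma~\ref{skorohod2} — this is where one checks that the discrete pushing operations converge to the local-time pushing $d\theta_i$; (4) conclude that $(Y(t),X(t))\sim Q_t^h q_s$, and since this holds for all $t$ and $Y$ is adapted and Markov by construction (the map $\Gamma$ is causal, and the commutation relation propagates the identity to finite-dimensional distributions exactly as in the induction of Proposition~\ref{push}), $Y$ is an $h$-Brownian motion.

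The main obstacle is step (3): establishing that the iterated affine maps $\phi$ converge, as the mesh refines, to the continuum Skorohod map $\Gamma$, and in particular that the accumulated ``pushes'' converge to the increasing processes $\theta_i$ with $d\theta_i$ supported on $\{t: y_{i+1}(t)=x_i(t)\}$. The natural way around this is not to take a literal limit of Proposition~\ref{push} but to prove the time-reversal/intertwining identity directly: show that for the $h$-Brownian motion $X$ and $Y=\Gamma_{y(0)}(X)$, the pair $(X(t),Y(t))$ has the stated law by a generator computation — i.e., verify that $Y$ solves the martingale problem for the $h$-transformed Laplacian, with the reflection terms $\theta_i$ contributing nothing to the drift because $d\theta_i$ is a boundary local time and $h$ vanishes on the corresponding face of $A_n$, so the $h$-transform kills exactly those terms. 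This Itô-calculus route bypasses the mesh-refinement analysis entirely and is the one I would actually carry out in detail, keeping the scaling-limit heuristic only as motivation.
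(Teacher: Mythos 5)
Your proposal takes a genuinely different route from the paper, and both of your candidate arguments contain gaps that keep them from closing.

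Your primary route, obtaining the proposition as a mesh-refinement limit of Proposition~\ref{push}, stalls at a point you do not quite acknowledge: step (2) asks to ``apply Proposition~\ref{push} with step law $Q_{t/m}$ in place of $q_r$,'' but the coupling maps $\phi_{u,v}$ are defined on the polytopes $\tau_{u,v}=\{x: u\preceq_s x\preceq_r v\}$ and $\tau'_{u,v}=\{y: u\preceq_r y\preceq_s v\}$, which only make sense when the $X$-step is constrained to be $r$-interlaced with its predecessor. The kernel $Q_{t/m}$ has full support on $A_n$, so a single step of the $h$-Brownian motion is not $r$-interlaced with anything for any fixed $r$, and the family $\phi_{u,v}$ has no analogue to apply. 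The commutation $Q_tq_s=q_sQ_t$ that you correctly establish in (a) is not by itself enough: Proposition~\ref{push} also uses the explicit measure-preserving isometry between $\tau_{u,v}$ and $\tau'_{u,v}$, and that geometric ingredient has no direct counterpart for a Gaussian step.

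Your fall-back via It\^o calculus also fails as stated. You write $dY_i = dX_i + d\theta_{i-1}-d\theta_i$, i.e.\ $dY_i = dB_i + \partial_i\log h(X)\,dt + d\theta_{i-1}-d\theta_i$, and claim the local-time terms vanish ``because $h$ vanishes on the corresponding face of $A_n$.'' But $d\theta_i$ is supported on $\{t: y_{i+1}(t)=x_i(t)\}$, which is a face of the interlacing polytope $E(s)$, not a face of $A_n$: at such times $Y$ is generically in the interior of $A_n$ and $h(Y)\ne 0$, so the $h$-transform does not annihilate these terms pathwise. Moreover the drift of $Y$ inherited from $X$ is $\nabla\log h(X)$, not $\nabla\log h(Y)$, so $Y$ is not an $h$-Brownian motion in a strong pathwise sense at all; the identity is only in law and requires a genuine filtering argument (the conditional law of $X(t)$ given $\mathcal F^Y_t$ must be $q_s(Y(t),\cdot)$, and the local-time and drift mismatch terms must be shown to cancel under this conditioning). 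That is not a generator computation; it is the hard part.

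For contrast, the paper proves Proposition~\ref{bm} by reducing to Proposition~\ref{bm2} (a projection onto the hyperplanes $H_{\pm s/2}$), and then arguing via time reversal of a reflected Brownian motion. The pair $(X,Y)$ is re-coordinatized as $(R,L)=f(X,Y)$ with $R$ a semimartingale reflecting Brownian motion in the polyhedron $A_n(2\pi-s)\cap H_0$ and $L$ built from the local times $\Theta_i$. The reflection directions of $R$ satisfy Williams' skew-symmetry condition, which yields an explicit time-reversal duality $(R(t),L(t);t\in[0,T]) \stackrel{law}{=} (R^\dag(T-t),L^\ddag(T-t);t\in[0,T])$ conditionally on $L$ remaining in $A_n(s)$. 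Translating back gives $(X,Y)\stackrel{law}{=}(Y^\dag(T-\cdot),X^\dag(T-\cdot))$ on the same event, and a change of measure by $e^{-\lambda_0 T}h(Y(0))h(X(T))/\gamma_r$ both removes the conditioning and turns $X$ into a stationary $h$-Brownian motion, from which the law of $Y$ is read off. This is a different mechanism from both of yours: it is a global time-reversal identity exploiting the geometry of the reflection data, not a scaling limit and not an It\^o/generator calculation.
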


The domain $A_n$ is unbounded, which is a nuisance when we come to the 
probability, where life is easier if we have  a finite invariant  measure 
on the state space. Everything we do is invariant to shifts along the 
diagonal of ${\mathbf R}^n$, and  it is useful  to  project onto the 
hyperplane $H_0=\bigl\{ x \in {\mathbf R}^n: \sum x_i=0\bigr\}$.   For $x 
\in {\mathbf R}^n$,   the orthogonal projection of $x$ onto $H_0$ is given 
by
$ x \mapsto x- \bar x {\mathbf 1}$ where ${\mathbf 1} \in {\mathbf R}^n$ 
is the vector with every component equal to $1$, and $\bar{x}= 
n^{-1}\sum_i x_i$.
If $\bigl(X(t); t \geq 0\bigr)$ is an $h$-Brownian motion in $A_n$, then 
its projection onto $H_0$ is itself a diffusion process and indeed can be 
described as the $h$-transform of an  $(n-1)$-dimensional   Brownian 
motion in  $H_0$, killed on exiting $A_n \cap H_0$.  Introducing more 
generally   $H_s=\bigl\{ x \in {\mathbf R}^n: \sum x_i=s\bigr\}$, and the 
notion of an $h$-Brownian motion on $A_n \cap H_s$, we have the following 
variant of Propostion \ref{bm}.

\begin{prop}
\label{bm2}
Let $(X(t); t \geq 0)$ be an $h$-Brownian motion in $A_n \cap H_{s/2}$, 
starting from a point
$X(0)$ having the distribution $q_s(y,dx)$  for some given $y=y(0) \in A_n 
\cap H_{-s/2}$.
Then the process   $Y=\Gamma_{y(0)}( X)$ is distributed as a $h$-Brownian 
motion in $A_n\cap H_{-s/2}$.
\end{prop}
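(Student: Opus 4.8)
The plan is to reduce Proposition \ref{bm2} to Proposition \ref{bm} by observing that both statements describe the same pathwise construction $Y = \Gamma_{y(0)}(X)$, and the only difference is a bookkeeping device: Proposition \ref{bm2} simply records which affine slice $H_s$ each configuration lives in. First I would note that the map $\Gamma_{y(0)}$ and the interlacing relation $\preceq_s$ are both equivariant under the diagonal shift $x \mapsto x + c\mathbf{1}$: if $x \preceq_s x'$ then $(x+c\mathbf{1}) \preceq_s (x'+c\mathbf{1})$, and the Skorohod data $u_i(t) = x_i(t) - x_i(0)$ used in the proof of Proposition \ref{yfromx} is unchanged by replacing $x$ with $x + c\mathbf{1}$ (since only increments enter), so $\theta$, and hence $Y$, transform by the same shift. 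Consequently the whole construction of Proposition \ref{bm} descends to a construction between slices: if $X$ lives in $A_n$ and $X(0) \in H_{a}$, then the definition of $q_s$ forces the law $q_s(y,\cdot)$ to be supported on $\{x : y \preceq_s x\}$, i.e. on $H_{a}$ with $y \in H_{a-s}$, and $Y = \Gamma_{y(0)}(X)$ stays in $H_{a-s}$ for all $t$ because property (ii) of Proposition \ref{yfromx} gives $\sum_i y_i(t) - \sum_i y_i(0) = \sum_i x_i(t) - \sum_i x_i(0) + \sum_i(\theta_{i-1}(t) - \theta_i(t)) = \sum_i x_i(t) - \sum_i x_i(0)$ (the telescoping $\theta$ sum vanishes by $n$-periodicity of indices), so $Y(t)$ stays on a fixed slice parallel to $X(t)$'s slice.

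Next I would address the claim that an $h$-Brownian motion in $A_n$ started from a point of $H_{s/2}$ is the same object (up to the obvious affine identification) as an $h$-Brownian motion in $A_n \cap H_{s/2}$ together with an independent one-dimensional motion along the diagonal. The transition density $Q_t$ of the $h$-Brownian motion in $A_n$ factorizes: the diagonal component $\bar X(t) = n^{-1}\sum_i X_i(t)$ is a one-dimensional Brownian motion independent of the projection onto $H_0$, and the projection is exactly the $h$-transform of $(n-1)$-dimensional Brownian motion in $H_0$ killed on exiting $A_n \cap H_0$ — this is exactly the decomposition already asserted in the paragraph preceding Proposition \ref{bm2}, and it holds because $h$ is invariant under diagonal shifts and the Dirichlet domain $A_n$ is a product (diagonal) $\times$ (alcove cross-section). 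Since $\Gamma_{y(0)}$ commutes with diagonal shifts, the pair $(X,Y)$ of Proposition \ref{bm} splits as a common diagonal Brownian motion (the diagonal components of $X$ and $Y$ differ by the constant $-s/2 - (s/2) = -s$... more precisely by a deterministic constant determined by the slices) and a pair $(\tilde X, \tilde Y)$ of projected processes, where $\tilde X$ is an $h$-Brownian motion on $A_n \cap H_{s/2}$ (once we recenter) started from $q_s(y,\cdot)$ projected, and $\tilde Y = \Gamma_{y(0)}(\tilde X)$ is its image. Proposition \ref{bm} asserts $Y$ is an $h$-Brownian motion in $A_n$; restricting that conclusion to the $H_{-s/2}$ slice and quotienting out the (shared, hence deterministic-given-$X$) diagonal part yields precisely the statement of Proposition \ref{bm2}.

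I expect the main obstacle to be making the factorization in the previous paragraph clean, specifically checking that the conditional law of $X(0)$ under $q_s(y,\cdot)$ is consistent with the slice picture: one must verify that $q_s(y,\cdot)$, which was defined in Section 3 as a measure on $A_n$ via $(1/\gamma_s)(\Delta(\pi(x'))/\Delta(\pi(x)))\, l_s(x,dx')$, is automatically supported on the single slice $H_{a}$ when $y \in H_{a-s}$ — but this is immediate from the defining constraint $\sum_i(x'_i - x_i) = s$ in the definition of $\preceq_s$, so in fact there is no obstacle here at all, and the restriction of $q_s$ to that slice is exactly the $(n-1)$-dimensional kernel that appears in Proposition \ref{bm2}. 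A second, genuinely minor point to get right is the additivity of the diagonal components: because the measure $d\theta_i$ is supported on $\{t : y_{i+1}(t) = x_i(t)\}$ and the $\theta_i$ are only constrained to be increasing, one needs the telescoping identity $\sum_{i=1}^n (\theta_{i-1}(t) - \theta_i(t)) = 0$, valid by the $n$-periodic indexing convention ($\theta_0 = \theta_n$), to conclude $\bar Y(t) - \bar Y(0) = \bar X(t) - \bar X(0)$; this is what pins $Y$ to a parallel slice and makes the reduction work. Granting these, Proposition \ref{bm2} follows from Proposition \ref{bm} by projection onto $H_0$ and the diagonal-shift equivariance of $\Gamma$.
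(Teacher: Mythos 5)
Your proposal is circular within the paper's logical structure, and it also omits the essential analytic content of the result. The paper's order of reasoning is the reverse of yours: it \emph{proves} Proposition~\ref{bm2} directly, via the reflected Brownian motion $R$ in $H_0\cap A_n(2\pi-s)$, the skew-symmetry of the reflection directions $v^i=n^i+q^i$, the resulting time-reversal identity from \cite{rwilliams}, and finally an $h$-transform of the stopped process (the change of measure $\tilde{\mathbf P}=\frac{e^{-\lambda_0 T}}{\gamma_r}\mathbf 1_\Lambda\,h(Y(0))h(X(T))\cdot\mathbf P$). Proposition~\ref{bm} is then \emph{deduced} from Proposition~\ref{bm2} using exactly the product decomposition you invoke: the projection of an $h$-Brownian motion in $A_n$ onto $H_0$ is independent of the diagonal Brownian motion $n^{-1/2}\sum_i X_i(t)$. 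So taking Proposition~\ref{bm} as known and pushing the decomposition the other way gives no new information and, in the paper's economy, assumes the conclusion.

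Your supporting observations are correct as far as they go: $\Gamma_{y(0)}$ is equivariant under diagonal shifts, the telescoping identity $\sum_{i=1}^n(\theta_{i-1}-\theta_i)=0$ pins $Y$ to a slice parallel to $X$'s, and $q_s(y,\cdot)$ is automatically supported on the single hyperplane $\{x:\sum_i(x_i-y_i)=s\}$. These facts show that Propositions~\ref{bm} and~\ref{bm2} are equivalent once one knows the product structure of $h$-Brownian motion in $A_n$, but they do not prove either. What is missing is the genuine probabilistic input: why the map $\Gamma_{y(0)}$ sends an $h$-Brownian motion to an $h$-Brownian motion at all. The one-step measure-preserving argument that worked for the random walks in Section~3 is unavailable in continuous time, and the paper replaces it with the reflected-process/duality/time-reversal machinery sketched above; your proposal contains none of this. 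To repair it you would need either an independent proof of Proposition~\ref{bm} (not a reduction back to~\ref{bm2}) or a direct argument for~\ref{bm2} such as the one the paper gives.
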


 Proposition \ref{bm} is easily deduced from this variant  using 
the fact that if $\bigl(X(t); t \geq 0\bigr)$ is an $h$-Brownian motion in 
$A_n$, then 
the projection of $X$ onto $H_0$ is independent of the process 
$n^{-1/2}\sum_i X_i(t)$, which is a one-dimensional Brownian motion.

The  results in the previous section were proved using the
measure-preserving properties associated with the dynamics for a single 
update. Since here we are working with continuous time processes such one
time-step methods are not applicable.
The  use of the Skorohod lemma in the construction of $\Gamma$ suggests
 that there may be a role to be played by a certain reflected Brownian 
motion.  We describe next how, adapting the
idea of Proposition \ref{yfromx} slightly, we can construct  interlaced 
processes $X$ and $Y$ from a reflected Brownian motion
$R$ in the domain $H_0\cap {A}_n(2\pi-s)$. Then it will turn out that time 
reversal properties of $R$ can be used to prove
Proposition \ref{bm2}.

Let $E(s)= \{ (x,y) \in (A_n \cap H_{s/2})  \times (A_n \cap H_{-s/2}):  y 
\preceq_s x\} $. We can introduce new co-ordinates on $E(s)$ as follows. 
For $(x,y)\in E(s)$ we let  $f (x,y)$ be  the unique $(r,l) \in H_0 \times 
H_0  $ such that
\begin{align}
r_{i+1}-r_i&=y_{i+1}-x_i, \\
l_{i+1}-l_{i}&=x_{i+1}-y_{i+1},
\end{align}
where $r_{n+1}= r_1+(2\pi -s)$ and $l_{n+1}= l_1+s$.
It is easily seen that $f:E(s) \rightarrow (H_0\cap {A}_n(2\pi-s)) \times 
( H_0\cap {A}_n(s))$ is bijective.

Now we construct  a process in $E(s)$  via these alternative co-ordinates.  
Begin by letting $\bigl( U(t); t \geq 0 \bigr)$ be a standard Brownian 
motion in 
${\mathbf R}^n$ starting from zero, and let $V(0)$ be an independent 
random variable, uniformly distributed in $H_0\cap {A}_n(2\pi-s)$. Let $ 
\bigl(V, \Theta \bigr)$ be determined from $U$ and $V(0)$ by applying the 
Skorohod mapping  for $A_n(2\pi-s)$ as given  in
Proposition \ref{skorohod2}.  Finally let $R(t)$  be the projection of 
$V(t)$ onto $H_0$, so $R(t)=V(t)-\bar{V}(t){\mathbf 1}$.
The process
$\bigl(R(t); t\geq 0\bigr)$ is, by construction, a semimartingale 
reflecting 
Brownian motion in  the polyhedron ${A}_n(2\pi-s)\cap H_0$. See 
\cite{rwilliams} for the general theory of such processes. 
Next we introduce a process $L$ also taking value in $H_0$, which is 
constructed out  a random initial value $L(0)$ together with  the 
increasing processes $\Theta_i$ for $i=1,2, \ldots,n$. Choose $L(0)$
independent of $R$ and uniformly distributed on $A_n(s) \cap H_0$, and let 
$L(t)$ be given by
\[
L(t)=  L_i(0)- \pi\Theta(t),
\]
where $\pi:{\mathbf R}^n \rightarrow H_0$ denotes the  projection onto $H_0$ defined by $\pi x= x -
\bar{x} {\mathbf 1}$.
Define the stopping time $\tau= \inf\{ t \geq 0: L(t) \not\in A_n(s)\}$. 
Then for $t \leq \tau$, we define $X(t)$ and $Y(t)$ by $\bigl(R(t), L(t) 
\bigr)= f\bigl(X(t), Y(t) \bigr)$.  The joint law of $ \bigl(X(t \wedge 
\tau ), Y(t \wedge \tau);  t \geq 0 \bigr)$ may be described  as follows.
\begin{enumerate}
\item $(X(0), Y(0))$ is uniformly distributed on $E(s)$.
\item $ \bigl(X(t \wedge \tau); t \geq 0)$ is distributed as a Brownian 
motion in $H_0$ stopped at the instance it first leaves $A_n$, and 
conditionally independent of $Y(0)$ given $X(0)$.
\item  $Y= \Gamma_{Y(0)} X$.
\end{enumerate}

We now turn to the time reversibility  of $R$. 
For  any vector $x \in {\mathbf R}^n$ we  will denote by $x^\dag$ the vector given by 
$x^\dag_i=-x_{n-i+1} $. Note that if $x \in H_0 \cap A_n(2\pi-s) $ then $x^\dag \in H_0 \cap A_n(2\pi-s)$ also. We also define, for any $x \in {\mathbf R}^n$, the vector
$x^\ddag$ via 
$x^\ddag_i=-x_{n-i}-s/n $ for $1=1,2, \ldots (n-1)$ and $x^\ddag_n=-x_n+s(n-1)/n$. Note that if $x \in H_{0} \cap A_n(s) $ then $x^\ddag \in H_{0} \cap A_n(s)$ also.

\begin{prop} Fix some constant  $T>0$. 
Let the processes $R$ and $L$ be  as above and let $\Lambda$ be the event 
$\{ L(t)\in A_n(s) \text{ for all } t \in 
[0,T]\}$. Then 
conditionally on $\Lambda$,
\[
\bigl( R(t), L(t); t \in [0,T] \bigr) \stackrel{law}{=} \bigl( 
R^\dag(T-t), L^\ddag(T-t) ; t \in [0,T] \bigr).
\]
\end{prop}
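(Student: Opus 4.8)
The plan is to derive the time-reversal statement for the pair $(R,L)$ from a known time-reversal property of the driving reflected Brownian motion $R$, combined with the fact that on the event $\Lambda$ the local-time process $\Theta$ is a deterministic functional of $R$. The starting observation is that $R$ is a semimartingale reflecting Brownian motion in the polytope $A_n(2\pi-s)\cap H_0$, and that this polytope is invariant under the linear involution $x\mapsto x^\dag$ (indeed $x^\dag_i=-x_{n-i+1}$ simply relabels and negates coordinates, which maps the chain of inequalities $x_1\le\cdots\le x_n\le x_1+(2\pi-s)$ to itself). For reflecting Brownian motion in a bounded polyhedron with the natural normal reflection, the stationary distribution is (a constant times) Lebesgue measure, and such a stationary RBM is reversible up to the symmetry of the domain; so if $R$ is started from its stationary (uniform) law, then $(R(t);t\in[0,T])\stackrel{law}{=}(R^\dag(T-t);t\in[0,T])$. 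This is the key input, and I would cite \cite{rwilliams} for the relevant theory of reflected Brownian motions and their reversibility.

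Next I would track what happens to the reflection local times under this reversal. The process $\Theta$ is built from $R$ (equivalently from $V$) via the Skorohod map of Proposition~\ref{skorohod2}: $d\Theta_i$ is the local time pushing apart the $i$-th pair of coordinates, supported on $\{t:v_i(t)=v_{i+1}(t)\}$. Under $x\mapsto x^\dag$ the boundary face $\{v_i=v_{i+1}\}$ is carried to the face $\{v_{n-i}=v_{n-i+1}\}$ (with the cyclic face going to itself), so the time-reversed path $R^\dag(T-\cdot)$ accumulates, on $[0,T]$, exactly the same total local time on each face, but with indices permuted by $i\mapsto n-i$ and with a time reversal. Concretely, if $\widehat\Theta$ denotes the local-time vector of the reversed path, then on $\Lambda$ one has $\widehat\Theta_i(t)=\Theta_{n-i}(T)-\Theta_{n-i}(T-t)$ (reading the local time of the reversed path as the "leftover" local time of the forward path). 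Feeding this through the defining relation $L(t)=L(0)-\pi\Theta(t)$ and the definition of $x\mapsto x^\ddag$ — which is precisely the index-reversal-plus-negation on $A_n(s)\cap H_0$ adjusted by the constant shift coming from $l_{n+1}=l_1+s$ rather than $l_1+(2\pi-s)$ — should produce $L^\ddag(T-t)$, provided the reversed initial value $L(0)$ matches. Since $L(0)$ is uniform on $A_n(s)\cap H_0$ and $x\mapsto x^\ddag$ preserves this set and this measure, the terminal value $L(T)$ of the forward process plays the role of the initial value of the reversed process with the correct law; this is where conditioning on $\Lambda$ (so that no killing has occurred and $\Theta$, $L$ are genuinely defined over all of $[0,T]$) is used.

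The cleanest way to organize all of this is to work with the $f$-coordinates: recall $f:E(s)\to(H_0\cap A_n(2\pi-s))\times(H_0\cap A_n(s))$ is a bijection, and the maps $x\mapsto x^\dag$, $x\mapsto x^\ddag$ are exactly the two factors of the symmetry of $E(s)$ that $f$ intertwines with $(x,y)\mapsto(v^\dag,\ldots)$-type reflections from the discrete Proposition (the one proving $\psi_{uv}$ is a measure-preserving bijection). So I would first check the purely linear-algebraic identity that $f$ conjugates the involution $(x,y)\mapsto$ (appropriate reflection) on $E(s)$ to the pair $(R,L)\mapsto(R^\dag,L^\ddag)$ on the product of polytopes — this is routine but must be done carefully with the conventions $r_{n+1}=r_1+(2\pi-s)$, $l_{n+1}=l_1+s$. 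Then the probabilistic content reduces to: (a) stationary RBM $R$ in a polytope symmetric under $x\mapsto x^\dag$ is reversible under $t\mapsto T-t$ composed with $\dag$; and (b) the Skorohod local times of the $\dag$-reversed path are the $\dag$-relabelled time-reversed local times of the original — both of which follow from uniqueness in the Skorohod problem (Proposition~\ref{skorohod2}) applied to the reversed semimartingale decomposition of $R$.

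\textbf{The main obstacle} I anticipate is part (b): carefully identifying the local-time process of the time-reversed reflected Brownian motion with the (relabelled, reversed) forward local times. Time reversal of a reflecting diffusion turns the reflection term, which for the forward process is a predictable increasing process, into something that must be reinterpreted — the reversed process still reflects off the same faces, but one must argue, via the uniqueness in the Skorohod lemma and the fact that Brownian motion is reversible, that the decomposition $V_i(t)=V_i(0)+U_i(t)-\Theta_i(t)$ transforms correctly, i.e. that $U^\dag(T-\cdot)-U^\dag(T)$ is again a Brownian motion and that the "leftover local time" satisfies the complementarity condition ($d\widehat\Theta_i$ supported on $\{v_i=v_{i+1}\}$) needed to invoke uniqueness. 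Making this rigorous — as opposed to the heuristic "reflecting BM started stationary is reversible" — is the technical heart, and I would handle it by applying Proposition~\ref{skorohod2} directly to the reversed driving noise and checking the three defining properties hold for the candidate $(V^\dag(T-\cdot),\ \Theta(T)-\Theta(T-\cdot)$ suitably relabelled$)$.
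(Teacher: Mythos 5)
Your overall strategy matches the paper's: time-reverse $R$, deduce the time-reversal of the local times $\Theta$, then use the uniform law of $L(0)$ and the identity $L(t)=L(0)-\pi\Theta(t)$ to transfer the reversal to $L$. But there is a genuine gap in the key step, and it is precisely the step you flag as your ``main obstacle.''

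You assert that $R$ is a reflecting Brownian motion ``with the natural normal reflection'' in $A_n(2\pi-s)\cap H_0$, and deduce reversibility from this. That premise is false. Inspecting the Skorohod system of Lemma~\ref{skorohod2}, the local time $d\Theta_i$ (supported on the face $\{v_i=v_{i+1}\}$) acts on $V$ only through $-e_i$, not through the inward normal $(e_{i+1}-e_i)/\sqrt2$; after projecting to $H_0$, the reflection direction at the face $F_i$ is $v^i=\sqrt2(-e_i+\tfrac1n\mathbf{1})$, which decomposes as $n^i+q^i$ with $q^i\neq0$. So the reflection is \emph{oblique}, and the elementary statement ``stationary normally-reflected BM is reversible'' does not apply. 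Because the reflection is oblique, the time-reversal of $R$ (started from Lebesgue measure) is \emph{a priori a different} reflected Brownian motion, with reflection directions $n^i-q^i$. The paper's proof hinges on verifying the skew-symmetry condition $n^i\cdot q^j+q^i\cdot n^j=0$ for $i\neq j$, which is what licenses the appeal to Williams's duality theorem (Theorem 1.2 of \cite{rwilliams}) to identify the reversed process, and then on checking that the process $R^\dag$ is exactly the RBM with those dual directions. Your argument skips both of these verifications; without them the identity $(R(t))\stackrel{law}{=}(R^\dag(T-t))$ is unproven.

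Your proposed route around the local-time identification---apply Lemma~\ref{skorohod2} to the ``reversed driving noise'' and invoke uniqueness---also does not quite close the loop on its own, because to run it you must first know that the reversed semimartingale decomposition of $R$ again has a Brownian martingale part; that is the content of the Williams duality step, not something independent of it. Once the reversal of $R$ is established, the paper's treatment of $\Theta$ is more direct than what you sketch: it represents $2\Theta_i(t)$ as the occupation-time limit $\lim_{\epsilon\downarrow0}\epsilon^{-1}\int_0^t\mathbf{1}(R_{i+1}(s)-R_i(s)\le\epsilon)\,ds$, a deterministic functional of the $R$-path, so the reversal of $\Theta$ (with the relabelling $i\mapsto n-i$ you correctly anticipate) follows immediately. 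The final change-of-variables step for $L$ that you outline---integrating over the uniform $L(0)$ and substituting---is essentially what the paper does and is correct.
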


\begin{proof}

As remarked above $R$ is a semimartingale reflected Brownian motion in the 
polyhedral domain $A_n(2\pi -s) \cap H_0$. Indeed it satisfies
\[
R(t)= R(0)+B(t)+ \frac{1}{\sqrt{2}}  \sum \Theta_i(t) v^i,
\]
where $B$ is a standard Brownian motion in $H_0$ and the vector $v^i$ 
describes the direction of reflection 
associated with the face $F_i=\{ x \in  A_n(2\pi -s) \cap H_0: 
x_i=x_{i+1}\}$. Let $n^i$ be the inward facing unit normal to this face. 
Then an easy calculation shows that $v^i$, which is normalized so that the 
inner product $n^i \cdot v^i=1$, is given by $v^i=n^i+q^i$ where the $j$th 
component of the vector $q^i$  is given by
\[
q^i_j= \begin{cases}
\sqrt{2}/n-1/\sqrt{2} & \text{ if } j=i,i+1,\\
\sqrt{2}/n &\text{ otherwise.}
\end{cases}
\]
We observe that the skew-symmetry condition,
\[
n^i \cdot q^j+ q^i \cdot n^j =0,
\]
for all $i \neq j $, is met. 
Consequently by Theorem 1.2 of \cite{rwilliams}, the reflected Brownian 
motion $R$ is in duality relative to Lebesgue measure to another reflected 
Brownian motion on $H_0\cap {A}_n(2\pi-s)$ with  direction of reflection 
from the face  $F_i$ being $n^i-q^i$. It is not difficult to check that
$R^\dag$ is such a reflected Brownian motion.  Thus
\begin{equation*}
\bigl( R(t); t \in [0,T] \bigr) \stackrel{law}{=} \bigl( R^\dag(T-t) ; t 
\in [0,T] \bigr).
\end{equation*}
The process $2\Theta_i(t)$ is
the local time of $R_{i+1}(t)-R_i(t)$ at zero, and can be represented as  
\begin{equation*}
\lim_{ \epsilon \downarrow 0} \frac{1}{\epsilon} \int_0^t {\mathbf 
1}\bigl( R_{i+1}(s)- R_i(s) \leq \epsilon\bigr)ds.
\end{equation*}
Now  note that,
\begin{multline*}
\int_0^t {\mathbf 1}\bigl( R_{i+1}(s)- R_i(s) \leq \epsilon\bigr)ds
\stackrel{law}{=} \int_0^t {\mathbf 1}\bigl( R^\dag_{i+1}(T-s)- 
R^\dag_i(T-s) \leq \epsilon\bigr)ds \\
= \int_{T-t}^T {\mathbf 1}\bigl( R_{n-i+1}(s)- R_{n-i}(s) \leq 
\epsilon\bigr)ds
\end{multline*}
From this 
we deduce that the time reversal property extends to
\begin{equation*}
\bigl( R(t), \Theta(t); t \in [0,T] \bigr) \stackrel{law}{=} \bigl( 
R^\dag(T-t), \Theta^\sharp(T)- \Theta^\sharp(T-t); t \in [0,T] \bigr),
\end{equation*}
where $ \Theta^\sharp_i(t)= \Theta_{n-i} $ for $i=1,2,\ldots,(n-1)$ and 
$\Theta^\sharp_n=\Theta_n$.

Let $F$ be a bounded path functional.  Let $v_n(s)$ denote the Lebesgue measure of $H_0 \cap A_n(s)$. Then using the time reversal property,
\begin{multline*}
{\mathbf E} \left[ F\bigl( R(t), L(t); t \in [0,T] \bigr){\mathbf 
1}_\Lambda \right] = \\
\frac{1}{v_n(s)}{\mathbf E} \left[ \int_{H_0} d\alpha F\bigl( R(t), 
\alpha-\pi \Theta(t); t \in [0,T] \bigr)
{\mathbf 1}{\{ \alpha-\pi\Theta(t) \in A_n(s) ; t \in[0,T]\}} \right] =
\\
\frac{1}{v_n(s)}{\mathbf E} \Biggl[ \int_{H_0} d\alpha F\bigl( 
R^\dag(T-t), \alpha- \pi\Theta^\sharp(T)+\pi\Theta^\sharp(T-t); t \in 
[0,T] \bigr) \Biggr. 
\\ \Biggl. \times
{\mathbf 1}{\bigl\{ \alpha-\pi\Theta^\sharp(T)+\pi\Theta^\sharp(T-t) \in 
A_n(s) ; t \in[0,T] \bigr\}} \Biggr].
\end{multline*}

Now we  make the substitution $\hat{\alpha} = \alpha-\pi\Theta^\sharp(T)$ 
to obtain
\begin{multline*}
\frac{1}{v_n(s)}
{\mathbf E} \Biggl[ \int_{H_0} d\hat{\alpha} F\bigl( R^\dag(T-t), 
\hat{\alpha}+ 
\pi\Theta^\sharp(T-t) ; t \in [0,T] \bigr) 
\Biggr. \\  \Biggl. \times{\mathbf 1}{\{ 
\hat{\alpha}+\pi\Theta^\sharp(T-t) 
\in A_n(s) , t \in[0,T]\}} \Biggr]=\\
{\mathbf E} \left[ F\bigl( R^\dag(T-t), L^\ddag(T-t); t \in [0,T]
\bigr){\mathbf 1}_\Lambda \right],
\end{multline*}
where we have used $L^\ddag(t)=L^\ddag(0)+ \pi \Theta^\sharp(t)$.
\end{proof}

\begin{proof}[Proof of Proposition \ref{bm2}]
Let $R$, $\Theta$ and $L$ be as above, and once again let $\Lambda$ be the 
event that $\big\{L(t) \in A_n(s) \text { for all } t \in [0,T]\bigr\}$. 
Recall  the  mapping $f$ such that
$\bigl(R(t), L(t) \bigr)= f\bigl(X(t), Y(t) \bigr)$. It is easily verified 
that
$ \bigl(Y^\dag(t),X^\dag(t)\bigr) \in E(s)$ and that
\[
\bigl(R^\dag(t), L^\ddag(t) \bigr)= f\bigl(Y^\dag(t), X^\dag(t) \bigr).
\]
Thus the preceeding time reversal result implies that, conditionally on 
$\Lambda$,
\[
\bigl( X(t), Y(t); t \in [0,T] \bigr) \stackrel{law}{=} \bigl( 
Y^\dag(T-t),X^\dag(T-t) ; t \in [0,T] \bigr).
\]
For the final step of the argument we consider $X$ and $Y$ be as 
above and denote the governing measure by ${\mathbf P}$. Then we let 
\[
\tilde{\mathbf P}= \frac{e^{-\lambda_0 T}}{\gamma_r}{\mathbf 1}_{\Lambda} h(Y(0))h(X(T)) \cdot 
{\mathbf P}.
\]
 Under $\tilde{\mathbf P}$,  the 
equality in law,
\[
\bigl( X(t), Y(t); t \in [0,T] \bigr) \stackrel{law}{=} \bigl( 
Y^\dag(T-t),X^\dag(T-t) ; t \in [0,T] \bigr).
\] holds unconditionally. 
Finally we note that under  $\tilde{\mathbf P}$, the distribution of 
$\bigl( X(t), t \in [0,T] \bigr)$ and hence  of $\bigl( Y^\dag(T-t), t \in 
[0,T] \bigr)$ is that of a stationary $h$-Brownian motion on $A_n \cap 
H_{s/2}$  
But this latter law is invariant under time reversal, and  its image under 
the conjugation $x \mapsto x^\dag$ is the law of  a stationary 
$h$-Brownian motion on $A_n \cap H_{-s/2}$. This is therefore   the law of 
$\bigl( Y(t), t \in [0,T] \bigr)$.  Conditioning on $Y(0)$ gives the 
statement of the proposition.
\end{proof}

In the case $n=2$, the results of this section can be expressed in terms of
Brownian motion in a compact interval. Let $X=(X_t,t\ge 0)$
be a Brownian motion conditioned, in the sense of Doob, never to exit the interval
$[-p,p]$, where $p>0$.  Let $y\in [-p,p]$, $a\in [0,2p]$ and suppose that the initial law
of $X$ is supported in the interval $[|y+a-p|-p,p-|y+p-a|]$ with density proportional
to $\cos(\pi x/2p)$. Let $Z$ be the image of the path $X+(y-X_0+a)/2$
under the Skorohod reflection map for the interval $[0,a]$. In other words,
$$Z_t=X_t+(y-X_0+a)/2+L_t-U_t,$$
where $L$ and $U$ are the unique continuous, non-decreasing paths such that the points
of increase of $L$ occur only at times when $Z_t=0$, the points of increase of $U$
occur only at times when $Z_t=a$, and $Z_t\in [0,a]$ for all $t\ge 0$.
Then the process
$$Y_t=y-X_0+X_t+2(L_t-U_t)\qquad t\ge 0$$ is a Brownian motion conditioned, in the
sense of Doob, never to exit the interval $[-p,p]$.  This is a special case of
Proposition~\ref{bm2}.  Actually, in the statement of that proposition we have
$p=\pi$, but this can be easily modified for general $p$.  It is interesting to consider
this statement when $y=0$ and $p\to\infty$. Then $X$ is a standard Brownian
motion, initially uniformly distributed on the interval $[-a,a]$.  The process $Z$
is a reflected Brownian motion in $[0,a]$, initially uniformly distributed on $[0,a]$.
The conclusion in this case is that $Y$ is a standard Brownian motion started from
zero.  We remark that in this setting, if instead we take $X_0=-a$, then $Y$ is a
Brownian motion started from zero, conditioned (in an appropriate sense) to hit $a$
before returning to zero.  This is a straightforward consequence of the above result
(for uniform initial law) and the fact (see~\cite{rp81}) that, if we set $T=\inf\{t\ge 0:\ Y_t=a\}$,
then the law of $X_T$ is uniform on $[-a,a]$.  Note that if we let $a\to\infty$ in this
case we recover Pitman's representation for the three-dimensional Bessel process.
There are explicit formulae for the Skorohod reflection map for
the compact interval $[0,a]$ and hence for the process $Y$ in the above discussion.
Let $f(t)=X_t+(y-X_0+a)/2$ and write $f(s,t)=f(t)-f(s)$.  A discrete version of the
Skorohod problem was considered in~\cite{toomey}, from which we deduce the
expressions
\begin{eqnarray*}
Z_t&=&\max\left\{ \sup_{0\le r\le t}  \min\{ f(r,t),a+\inf_{r\le s\le t} f(s,t)\} ,
\ \min\{ f(t),a+\inf_{0<s<t}f(s,t)\}\right\} \\
&=& \min\left\{ \inf_{0\le r\le t}  \max\{ a+f(r,t),\sup_{r\le s\le t} f(s,t)\} ,
\ \max\{ f(t),\sup_{0<s<t}f(s,t)\}\right\} .
\end{eqnarray*}
An alternative formula was obtained in~\cite{klrs}, which yields
\[ Z_t=\phi(t)-\sup_{0\le s\le t}\biggl[\bigl(\phi(s)-a\bigr)^+\wedge\inf_{s\le u\le t}\phi(u)\biggr]\]
where \[\phi(t)=f(t)+\sup_{0\le s\le t}[-f(s)]^+.\]
It could be interesting to relate the corresponding expressions for $Y_t$
to the Pitman transforms introduced in \cite{bbo05}.

\section{A bead model on the cylinder} 

In this section it will be convenient to work with a slightly weaker notion of
interlacing, defined as follows.
For $a,b\in D_n$, write $a\prec b$ if 
$$a_1\le b_1< a_2 \le \cdots < a_n \le b_n,$$  and $a \succ b$ if
$$b_1< a_1 \le b_2 < \cdots \le b_n < a_n.$$
For $y=\{e^{ia_1},\ldots,e^{ia_n}\}$ and $z=\{e^{ib_1},\ldots,e^{ib_n}\}$, 
where $a,b\in D_n$, write $y\prec z$ if either $a\prec b$ or $a\succ b$,
and define
$$l(y,z)=\begin{cases} \sum_j (b_j-a_j) & \mbox{if } \sum_j (b_j-a_j)\ge 0 \\
\sum_j (b_j-a_j)+2\pi & \mbox{otherwise.}\end{cases}$$

Consider the Markov kernels defined, for $q>0$, by
\begin{equation}\label{mdef}
m_q(y,dz) = c_q^{-1} \int_0^{2\pi} |1-e^{ir}|^{n-1} q^r p_r(y,dz) dr ,
\end{equation}
where $p_r$ is defined by (\ref{pr-def}) and
$$c_q=\int_0^{2\pi} |1-e^{ir}|^{n-1} q^r dr.$$
By Proposition \ref{br}, if we define
$$\I_q(y,z)=\begin{cases} q^{l(y,z)} & \mbox{ if 
$y\prec z$,}\\ 0 & \mbox{ otherwise,}\end{cases}$$
then
\begin{equation}\label{mi}
m_q(y,dz) = \tilde{c}_q^{-1} \frac{\Delta(z)}{\Delta(y)} \I_q(y,z) dz,
\end{equation}
where $\tilde c_q=c_q/(n-1)! $.
Recall that $\mu(dx)=(2\pi)^{-n}\Delta(x)^2dx$ is the probability measure on $C_n$ 
induced from Haar measure on $U(n)$.
The Markov chain with transition density $m_q$ has $\mu$ as an invariant
measure and, with respect to $\mu$, has time-reversed transition probabilities
$$ \overline{m}_q(z,dy)= \tilde c_q^{-1} \frac{\Delta(y)}{\Delta(z)} \I_q(y,z) dy .$$
We can thus construct a two-sided
stationary version of this Markov chain to obtain a probability measure $\a$
on $C_n^\Z$, supported on configurations $\cdots\prec x^{-1}\prec x^0\prec x^1\prec x^2
\prec \cdots$.  We will show that $\a$ defines
a determinantal point process on $[0,2\pi)^\Z$.
By stationarity it suffices to consider the restrictions 
$\a_m$ to the cylinder sets $C_{n,m}:=C_n^{\{1,2,\ldots,m\}}$. 
Writing $\bar{x}=(x^1,\ldots,x^m)$ and $d\bar{x}=dx^1\cdots dx^m$,
\begin{equation}\label{alpha}
\a_m(d\bar{x}) = \mu(dx^1)m_q(x^1,dx^2)\cdots m_q(x^{m-1},dx^m) .
\end{equation}
Assume for the moment that $q\ne 1$.  Define a function $f:\R\to\C$ by
$$f(u)=\frac{(qe^{i(n-1)/2})^{u\mod 2\pi}}{1-(-1)^{n-1}q^{2\pi}}.$$
\begin{lem}\label{iform}
For $y=\{e^{ia_1},\ldots,e^{ia_n}\}$ and $z=\{e^{ib_1},\ldots,e^{ib_n}\}$, 
where $a,b\in D_n$,
$$\I_q(y,z)= (1-(-1)^{n-1}q^{2\pi}) e^{ i \frac{n-1}{2}\sum_j(a_j-b_j)} 
\det\left( f(b_k-a_j) \right)_{1\le j,k\le n} .
$$
\end{lem}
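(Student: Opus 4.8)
The plan is to strip off all the analytic factors from the right-hand side and reduce the lemma to a purely combinatorial determinant evaluation. Set $Q=qe^{i(n-1)/2}$, so that $Q^{2\pi}=(-1)^{n-1}q^{2\pi}=:t$ and $f(u)=Q^{u\bmod 2\pi}/(1-t)$. Since $a,b\in D_n$ we have $b_k-a_j\in(-2\pi,2\pi)$ for all $j,k$, hence $(b_k-a_j)\bmod 2\pi=(b_k-a_j)+2\pi\mathbf 1_{\{b_k<a_j\}}$ and therefore $Q^{(b_k-a_j)\bmod 2\pi}=Q^{b_k}\,Q^{-a_j}\,t^{\mathbf 1_{\{b_k<a_j\}}}$. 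First I would substitute this into $\det\bigl(f(b_k-a_j)\bigr)_{1\le j,k\le n}$, pull the factor $Q^{-a_j}$ out of row $j$ and $Q^{b_k}$ out of column $k$, and collect terms: the scalar $Q^{\sum_k b_k-\sum_j a_j}$ that emerges has modulus $q^{\sum_j(b_j-a_j)}$ and phase $e^{i\frac{n-1}{2}\sum_j(b_j-a_j)}$, the latter cancelling exactly against the prefactor $e^{i\frac{n-1}{2}\sum_j(a_j-b_j)}$, while the $(1-t)^{-n}$ produced by the determinant meets the prefactor $(1-t)$. This reduces the claim to the equivalent identity
\[
\I_q(y,z)=(1-t)^{1-n}\,q^{\sum_j(b_j-a_j)}\,D,\qquad D:=\det\bigl(t^{\mathbf 1_{\{b_k<a_j\}}}\bigr)_{1\le j,k\le n}.
\]

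Next I would evaluate $D$ using the ordering of $a$ and $b$. Because $b_1\le\cdots\le b_n$, the set $\{k:b_k<a_j\}$ equals $\{1,\dots,d_j\}$ where $d_j:=\#\{k:b_k<a_j\}$, and because $a_1\le\cdots\le a_n$ we have $0\le d_1\le\cdots\le d_n\le n$; so row $j$ of the matrix is $(t,\dots,t,1,\dots,1)$ with exactly $d_j$ leading $t$'s. Performing the row operations $R_j\mapsto R_j-R_{j-1}$ for $j=n,\dots,2$ replaces row $j$ by $(t-1)$ times the indicator of the column block $\{d_{j-1}+1,\dots,d_j\}$, and several cases arise. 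If $d_{j-1}=d_j$ for some $j\ge2$ the reduced row $j$ vanishes, so $D=0$. Otherwise $d_1<\cdots<d_n$ and the $d_j$ run through $\{0,1,\dots,n\}$ with exactly one value $m$ deleted. If $0<m<n$, then after the reduction rows $1$ and $m+1$ must between them occupy the two remaining free columns $m$ and $m+1$, and the two ways of doing this correspond to permutations differing by the transposition of those columns, hence contribute cancelling terms, so $D=0$. If $m=0$ (so $d_j=j$) the reduced matrix is upper triangular and $D=t(t-1)^{n-1}$. If $m=n$ (so $d_j=j-1$) expanding along the last column gives $D=(-1)^{n-1}(t-1)^{n-1}=(1-t)^{n-1}$.

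Finally I would match the cases against the definition of $\I_q$. A direct check shows that $d_j=j-1$ for all $j$ is equivalent to $a\prec b$, and $d_j=j$ for all $j$ to $a\succ b$; these are precisely the two configurations for which $y\prec z$, while in every other case $D=0=\I_q(y,z)$. When $a\prec b$ one has $\sum_j(b_j-a_j)\ge0$, so $\I_q(y,z)=q^{\sum_j(b_j-a_j)}$, and indeed $(1-t)^{1-n}q^{\sum_j(b_j-a_j)}(1-t)^{n-1}=q^{\sum_j(b_j-a_j)}$. When $a\succ b$ one has $\sum_j(b_j-a_j)<0$, so $\I_q(y,z)=q^{\sum_j(b_j-a_j)+2\pi}$, and, using $(-1)^{n-1}t=q^{2\pi}$, $(1-t)^{1-n}q^{\sum_j(b_j-a_j)}\,t(t-1)^{n-1}=q^{\sum_j(b_j-a_j)}(-1)^{n-1}t=q^{\sum_j(b_j-a_j)+2\pi}$, as required. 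I expect the only genuinely delicate point to be the non-interlacing case $0<m<n$ — confirming that the transposition cancellation really forces $D=0$ there — together with the routine observation that it suffices to prove the identity for $a,b$ with all $2n$ coordinates distinct, the remaining boundary configurations being negligible for the absolutely continuous measure $\alpha_m$ in which $\I_q$ occurs; the algebraic bookkeeping of the first paragraph and the triangular evaluations when $m=0$ or $m=n$ are routine.
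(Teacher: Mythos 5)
Your proof is correct, and it follows the same overall strategy as the paper: strip off the analytic factors ($Q^{b_k}$, $Q^{-a_j}$, the $(1-t)$-powers) to reduce the lemma to a combinatorial evaluation of a determinant whose entries are $1$ or a scalar ($t$, called $c$ in the paper) according to the sign of $b_k-a_j$. Your case analysis in terms of the counts $d_j=\#\{k:b_k<a_j\}$ matches the paper's identification of $a\prec b$ with $d_j=j-1$ and $a\succ b$ with $d_j=j$, and your evaluations $D=(1-t)^{n-1}$ and $D=t(t-1)^{n-1}$ agree with the paper's $\det W = (1-c)^{n-1}$ and $c(c-1)^{n-1}$.

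Where you genuinely diverge — and improve on the published argument — is in the ``otherwise'' case. The paper asserts that whenever neither $a\prec b$ nor $a\succ b$ holds, there must be an index $j$ with rows $j$ and $j+1$ of $W$ \emph{identical} (i.e.\ $d_j=d_{j+1}$). That assertion is false: taking $n=2$, $a=(0,3)$, $b=(1,2)$ gives $(d_1,d_2)=(0,2)$, so the rows of $W$ are $(1,1)$ and $(c,c)$, which are distinct for $c\ne 1$, yet the determinant still vanishes. This is exactly your ``middle'' case $0<m<n$, in which the $d_j$ are strictly increasing but skip an interior value; the paper's three sub-cases do not cover it. Your two-column cancellation argument (rows $1$ and $m+1$ both restricted to columns $m,m+1$ with proportional entries, so the two contributing permutations cancel) correctly shows $D=0$ there and closes the gap. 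Your concluding remark that one may assume all $2n$ coordinates distinct isn't actually needed — the $d_j$-based argument works without that restriction, and indeed the repeated-value case $d_j=d_{j+1}$ is what handles ties — but it is harmless.
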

\begin{proof}
Let $c\ne 1$ and consider the $n\times n$ matrix $W=(w_{jk})$ defined by
$$w_{jk}=\begin{cases} 1 & a_j \le b_k \\
c & a_j>b_k.\end{cases}$$
If $a\prec b$, $W$ consists of 1's on and above the diagonal
and $c$'s below, so that $\det W=(1-c)^{n-1}$.
If $a\succ b$, $W$ consists of 1's above the diagonal
and $c$'s on and below the diagonal, so that $\det W=c(c-1)^{n-1}$.
If neither $a\prec b$ or $a\succ b$, then there must exist an index $j$ such that, either
$a_j=a_{j+1}$, or $a_j<a_{j+1}\le b_k$ for all $k$, or $b_k<a_j<a_{j+1}\le b_{k+1}$
for some $k$.  In each of these cases, rows $j$ and $j+1$ of $W$ are identical
and hence $\det W=0$. Thus,
\begin{equation}\label{int}
\det W = \begin{cases} (1-c)^{n-1} & a\prec b\\
c(c-1)^{n-1} & a\succ b \\
0 & \mbox{otherwise.}\end{cases}
\end{equation}
Taking $c=(qe^{i(n-1)/2})^{2\pi}=(-1)^{n-1}q^{2\pi}$, we can write
\begin{eqnarray*}
(1-c) e^{ i \frac{n-1}{2}\sum_j(a_j-b_j)} \det\left( f(b_j-a_k) \right)_{1\le j,k\le n} &=&
q^{\sum_j(b_j-a_j)} (1-c)^{-(n-1)} \det W \\
&=& \begin{cases} 
q^{\sum_j(b_j-a_j)} & a\prec b \\
q^{\sum_j(b_j-a_j)+2\pi} & a\succ b \\
0 & \mbox{otherwise}\end{cases} \\
&=& \I_q(y,z) ,
\end{eqnarray*}
as required.
\end{proof}
For $r=1,\ldots,m-1$, define $\phi_{r,r+1}:[0,2\pi)^2 \to\C$ by
$\phi_{r,r+1}(a,b)=f(b-a)$.
Define $\phi_{0,1}:\R \times [0,2\pi)\to\C$ and 
$\phi_{m,m+1}:[0,2\pi)\times\R \to\C$ by 
$\phi_{0,1}(a,b)=e^{iab}$ and $\phi_{m,m+1}(a,b)=e^{- iab}$.
For $r=1,\ldots,m$, write $x^r=\{e^{ia^r_1},\ldots,e^{ia^r_n}\}$, where $a^r\in D_n$,
and set $a^0_j=a^{m+1}_j=j-1$, for $j=1,\ldots,n$.
\begin{thm} \label{prod} For $q\ne 1$,
$$\a_m(d\bar{x})= Z_m^{-1}
 \prod_{r=0}^m \det\left(\phi_{r,r+1}(a^r_j,a^{r+1}_k)\right)_{1\le j,k\le n} d\bar{x},$$
 where $Z_m= \tilde c_q^{m-1}(1-(-1)^{n-1}q^{2\pi})^{-(m-1)}(2\pi)^n$.
\end{thm}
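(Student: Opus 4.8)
The plan is to expand the right-hand side of (\ref{alpha}) directly and reorganise it into the claimed product of determinants, the hypothesis $q\ne 1$ entering only through Lemma~\ref{iform}. First I would substitute $\mu(dx^1)=(2\pi)^{-n}\Delta(x^1)^2\,dx^1$ and the identity (\ref{mi}) for each factor $m_q(x^r,dx^{r+1})$, $r=1,\ldots,m-1$, into (\ref{alpha}), obtaining
$$\a_m(d\bar{x})=(2\pi)^{-n}\tilde c_q^{-(m-1)}\,\Delta(x^1)^2\left(\prod_{r=1}^{m-1}\frac{\Delta(x^{r+1})}{\Delta(x^r)}\right)\left(\prod_{r=1}^{m-1}\I_q(x^r,x^{r+1})\right)d\bar{x}.$$
The first product telescopes to $\Delta(x^m)/\Delta(x^1)$, so the Vandermonde factors collapse to $\Delta(x^1)\Delta(x^m)$ and we are left with $(2\pi)^{-n}\tilde c_q^{-(m-1)}\,\Delta(x^1)\Delta(x^m)\prod_{r=1}^{m-1}\I_q(x^r,x^{r+1})\,d\bar{x}$.

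Next I would feed each factor $\I_q(x^r,x^{r+1})$ through Lemma~\ref{iform}, which rewrites it as the scalar $(1-(-1)^{n-1}q^{2\pi})$ times the phase $e^{i\frac{n-1}{2}\sum_j(a^r_j-a^{r+1}_j)}$ times $\det\left(f(a^{r+1}_k-a^r_j)\right)_{1\le j,k\le n}$, and this last determinant is exactly $\det\left(\phi_{r,r+1}(a^r_j,a^{r+1}_k)\right)_{1\le j,k\le n}$ by the definition of $\phi_{r,r+1}$ for $1\le r\le m-1$. Taking the product over $r=1,\ldots,m-1$, the scalars contribute $(1-(-1)^{n-1}q^{2\pi})^{m-1}$ while the phases telescope to $e^{i\frac{n-1}{2}\sum_j(a^1_j-a^m_j)}$. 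At this stage $\a_m(d\bar{x})$ is a constant multiple of
$$\Bigl(\Delta(x^1)e^{i\frac{n-1}{2}\sum_j a^1_j}\Bigr)\Bigl(\Delta(x^m)e^{-i\frac{n-1}{2}\sum_j a^m_j}\Bigr)\prod_{r=1}^{m-1}\det\left(\phi_{r,r+1}(a^r_j,a^{r+1}_k)\right)_{1\le j,k\le n}\,d\bar{x}.$$

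Finally I would identify the two boundary brackets with the determinants of $\phi_{0,1}$ and $\phi_{m,m+1}$ generated by the convention $a^0_j=a^{m+1}_j=j-1$. Since $\phi_{0,1}(a,b)=e^{iab}$, the matrix $\left(\phi_{0,1}(a^0_j,a^1_k)\right)_{1\le j,k\le n}=\left((e^{ia^1_k})^{j-1}\right)_{1\le j,k\le n}$ is Vandermonde; using the ordering $0\le a^1_1\le\cdots\le a^1_n<2\pi$ and the elementary identity $e^{i\t_k}-e^{i\t_j}=i\,e^{i(\t_j+\t_k)/2}\,|e^{i\t_k}-e^{i\t_j}|$ (valid because $\sin\tfrac{\t_k-\t_j}{2}\ge 0$ there), one gets $\det\left(\phi_{0,1}(a^0_j,a^1_k)\right)=\prod_{j<k}(e^{ia^1_k}-e^{ia^1_j})=i^{\binom{n}{2}}e^{i\frac{n-1}{2}\sum_j a^1_j}\Delta(x^1)$, and similarly $\det\left(\phi_{m,m+1}(a^m_j,a^{m+1}_k)\right)=\prod_{j<k}(e^{-ia^m_k}-e^{-ia^m_j})=(-i)^{\binom{n}{2}}e^{-i\frac{n-1}{2}\sum_j a^m_j}\Delta(x^m)$. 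The phase constants $i^{\binom{n}{2}}$ and $(-i)^{\binom{n}{2}}$ multiply to $1$, so the product of the two boundary determinants is exactly $e^{i\frac{n-1}{2}\sum_j(a^1_j-a^m_j)}\Delta(x^1)\Delta(x^m)$, the combination appearing in the last display. Substituting back folds the two endpoint brackets into the product, turning it into $\prod_{r=0}^m\det\left(\phi_{r,r+1}(a^r_j,a^{r+1}_k)\right)_{1\le j,k\le n}$, and collecting the accumulated constants gives $\a_m(d\bar{x})=Z_m^{-1}\prod_{r=0}^m\det\left(\phi_{r,r+1}(a^r_j,a^{r+1}_k)\right)_{1\le j,k\le n}\,d\bar{x}$ with $Z_m^{-1}=(2\pi)^{-n}\tilde c_q^{-(m-1)}(1-(-1)^{n-1}q^{2\pi})^{m-1}$, i.e.\ $Z_m=\tilde c_q^{m-1}(1-(-1)^{n-1}q^{2\pi})^{-(m-1)}(2\pi)^n$ as claimed. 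The one step where care is needed is this last identification: one must keep track of the $i^{\binom{n}{2}}$-type phase constants and exploit the ordering of the angles to fix the correct branch, since $\Delta$ is defined through the moduli $|e^{i\t_l}-e^{i\t_m}|$ rather than the signed Vandermonde.
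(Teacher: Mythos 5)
Your proposal is correct and follows essentially the same route as the paper: substitute (\ref{mi}) into (\ref{alpha}), telescope the Vandermonde ratios to $\Delta(x^1)\Delta(x^m)$, apply Lemma~\ref{iform} to each $\I_q$ factor, and then recognise the surviving phase and Vandermonde factors as the two boundary determinants. The only cosmetic difference is the bookkeeping of the $i^{\binom{n}{2}}$ phases: the paper first packages $\Delta(x^1)\Delta(x^m)$ as $\det(e^{i(j-(n+1)/2)a^1_k})\det(e^{-i(j-(n+1)/2)a^m_k})$ and then absorbs the $e^{\pm i\frac{n-1}{2}(\cdot)}$ exponentials column-by-column to reach $\det(e^{i(j-1)a^1_k})\det(e^{-i(j-1)a^m_k})$, whereas you expand the boundary Vandermonde determinants directly and observe that the $i^{\binom{n}{2}}$ and $(-i)^{\binom{n}{2}}$ factors cancel; both are the same computation.
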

\begin{proof}
By (\ref{mi}) we can write
$$\a_m(d\bar{x}) = \tilde c_q^{-(m-1)}(2\pi)^{-n}\Delta(x^1)\Delta(x^m) 
\I_q(x^1,x^2)\cdots \I_q(x^{m-1},x^m)d\bar{x} .$$
Using the formula
$$\Delta(x^1)\Delta(x^m)=\det\left( e^{i(j-(n+1)/2)a^1_k} \right)_{1\le j,k\le n} 
\det\left( e^{-i(j-(n+1)/2)a^m_k} \right)_{1\le j,k\le n} ,$$
and Lemma~\ref{iform}, we obtain
\begin{eqnarray*}
\a_m(d\bar{x}) &=& Z_m^{-1} 
e^{ i \frac{n-1}{2}\sum_j(a^1_j-a^m_j)} 
\det\left( e^{i(j-(n+1)/2)a^1_k} \right)_{1\le j,k\le n} \\
 &\times & \det\left( e^{-i(j-(n+1)/2)a^{m+1}_k} \right)_{1\le j,k\le n}
 \prod_{r=1}^{m-1} \det\left(\phi_{r,r+1}(a^r_j,a^{r+1}_k)\right)_{1\le j,k\le n} d\bar{x} \\
 &=& {Z}_m^{-1} \det\left( e^{i(j-1)a^1_k} \right)_{1\le j,k\le n}
 \det\left( e^{-i(j-1)a^{m+1}_k} \right)_{1\le j,k\le n} \\
 && \times
  \prod_{r=1}^{m-1} \det\left(\phi_{r,r+1}(a^r_j,a^{r+1}_k)\right)_{1\le j,k\le n} d\bar{x} \\
  &=&  {Z}_m^{-1} 
  \prod_{r=0}^{m} \det\left(\phi_{r,r+1}(a^r_j,a^{r+1}_k)\right)_{1\le j,k\le n} d\bar{x} ,
\end{eqnarray*}
as required.
\end{proof}
\begin{cor}
For any $q>0$, the measure $\a$ defines a determinantal point process on
$[0,2\pi)^\Z$
with space-time correlation kernel given by
$$K(r,a;s,b)=\begin{cases}
\frac{1}{2\pi} \sum_{k=0}^{n-1} g_k^{r-s} e^{ i(b-a) k } & r\geq s \\
- \frac{1}{2\pi} \sum_{k \in \Z \setminus \{0,\ldots,n-1\} }
g_k^{r-s} e^{ i(b-a) k }& r<s \end{cases}$$
where
$$
g_k = \left( \int_0^{2\pi} f(u) e^{ - iuk } du \right)^{-1}
=   i \left( k - \frac{n-1}2 \right)
- \log q .
$$
\end{cor}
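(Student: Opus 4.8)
The plan is to deduce the determinantal structure from the product-of-determinants formula in Theorem~\ref{prod} by applying the general machinery of Eynard--Mehta / Borodin--Rains, in the form used by Johansson (see~\cite{j}). First I would handle the case $q\neq 1$ directly, and then recover $q=1$ by continuity of the kernel in $q$ (the functions $g_k$ and hence $K$ depend continuously on $q>0$, and the correlation functions are continuous limits of those for $q\neq 1$). So fix $q\neq 1$. Theorem~\ref{prod} expresses $\a_m$, up to the normalisation $Z_m$, as $\prod_{r=0}^m \det(\phi_{r,r+1}(a^r_j,a^{r+1}_k))$ with $n$ particles at each of the times $r=1,\dots,m$ and the boundary data $a^0_j=a^{m+1}_j=j-1$ frozen. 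This is exactly the setting of a determinantal ensemble with $m$ ``time slices'': by the Eynard--Mehta theorem, such a measure is a determinantal point process whose correlation kernel is built from the convolutions $\phi_{r,s}:=\phi_{r,r+1}*\phi_{r+1,r+2}*\cdots*\phi_{s-1,s}$ of the transition functions, together with the inverse of the Gram-type matrix $\bigl(\phi_{0,m+1}(a^0_j,a^{m+1}_k)\bigr)_{j,k}$ coming from the two boundary factors.

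The key computational step is to diagonalise everything in the Fourier basis. Since $\phi_{r,r+1}(a,b)=f(b-a)$ for $1\le r\le m-1$ is a convolution kernel on the circle, its action on the exponentials $e^{iku}$ is by the scalar $\int_0^{2\pi} f(u)e^{-iku}\,du$, which by a direct residue/geometric-series computation equals $g_k^{-1}$ with $g_k = i(k-\tfrac{n-1}{2})-\log q$; this is where the formula for $g_k$ in the statement is verified. The boundary kernels $\phi_{0,1}(a,b)=e^{iab}$ and $\phi_{m,m+1}(a,b)=e^{-iab}$ pair the frozen integer positions $a^0_j=j-1$ (and $a^{m+1}_k=k-1$) with the Fourier modes $e^{i(j-1)b}$, so only the modes $k=0,1,\dots,n-1$ survive at the boundary, and the relevant finite-dimensional ``Gram matrix'' $M_{jk}=\sum_{\ell} \phi_{0,1}(j-1,\cdot)*\phi_{1,m}*\phi_{m,m+1}(\cdot,k-1)$ becomes, in this basis, diagonal with entries proportional to $g_k^{m-1}$. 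Inverting it and assembling the Eynard--Mehta formula then gives, for $r\ge s$,
$$K(r,a;s,b)=\frac{1}{2\pi}\sum_{k=0}^{n-1} g_k^{r-s}\,e^{i(b-a)k},$$
the $n$ surviving modes being precisely those selected by the boundary. For $r<s$ one gets the same expression minus the full bi-infinite convolution kernel $\phi_{s,r}$ (the ``free'' part), which in Fourier terms is $\frac{1}{2\pi}\sum_{k\in\Z} g_k^{r-s} e^{i(b-a)k}$; subtracting the $k=0,\dots,n-1$ terms leaves $-\frac{1}{2\pi}\sum_{k\notin\{0,\dots,n-1\}} g_k^{r-s} e^{i(b-a)k}$, as claimed. (Convergence of this last sum for $r<s$ is where $\log q$ matters: $|g_k^{r-s}|\sim |k|^{-(s-r)}$ decays, and the factor $\operatorname{Re} g_k = -\log q$ guarantees no pole on the contour when $q\neq 1$.)

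The main obstacle I expect is bookkeeping rather than conceptual: one must check that the hypotheses of the Eynard--Mehta formalism genuinely apply here — in particular that the relevant Gram matrix is invertible (equivalently $g_k\neq 0$ for $k=0,\dots,n-1$, which holds since $\operatorname{Re} g_k=-\log q\neq 0$ when $q\neq 1$, and is handled by the limiting argument when $q=1$), and that passing from the finite cylinder sets $C_{n,m}$ to the full two-sided process $C_n^\Z$ is legitimate. For the latter I would argue by stationarity and consistency: the kernel $K(r,a;s,b)$ above for the window $\{1,\dots,m\}$ is independent of $m$ once $r,s$ lie well inside the window (the $m$-dependence drops out after inverting the Gram matrix, since the $g_k^{m-1}$ factors cancel), so the correlation functions of $\a_m$ converge to those determined by $K$ as $m\to\infty$, and $\a$ is the determinantal process with kernel $K$. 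Finally, the $q=1$ case follows because $g_k\big|_{q=1}=i(k-\tfrac{n-1}{2})$ is still nonzero for $0\le k\le n-1$, so $K$ extends continuously and the determinantal identities, being polynomial in the matrix entries, pass to the limit.
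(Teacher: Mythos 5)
Your proposal follows essentially the same route as the paper: invoke the product-of-determinants formula of Theorem~\ref{prod}, apply the Eynard--Mehta machinery as packaged in Johansson's \cite[Proposition 2.13]{j}, diagonalise the convolution kernels in the Fourier basis (where the computation $\int_0^{2\pi}f(u)e^{-iku}\,du = g_k^{-1}$ and the diagonal Gram matrix indeed fall out as you describe), and recover $q=1$ by continuity. The paper's proof is a one-line citation of exactly this argument, so your write-up is a correct and somewhat more detailed unpacking of the same approach.
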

\begin{proof}
For $q\ne 1$, this follows from Theorem~\ref{prod},
\cite[Proposition 2.13]{j} and a straightforward computation.
The case $q=1$ is obtained by continuity.
\end{proof}

Lemma~\ref{iform} can be used to give a direct proof of~(\ref{mi}),
and hence Proposition~\ref{br}.
\begin{proof}[Proof of Proposition \ref{br}]  
The characters $\chi_\lambda$ are given, for $y=\{e^{ia_1},\ldots,e^{ia_n}\}$, by
$$\chi_\lambda(y)=i^{- {n\choose 2}} \Delta(y)^{-1} \det\left( e^{i\mu_j a_k}\right)_{1\le j,k\le n}  ,$$
where $\mu=\lambda+\rho$ and 
$$\rho=\left(\frac{n-1}{2},\frac{n-1}{2}-1,\ldots,-\frac{n-1}{2}+1,-\frac{n-1}{2}\right).$$
Using Lemma~\ref{iform} and the Cauchy-Binet formula, we obtain
$$\int \frac{\Delta(z)}{\Delta(y)}\I_q(y,z) \chi_\lambda(z) dz = 
(1-(-1)^{n-1}q^{2\pi}) \prod_j (-i\mu_j-\log q)^{-1} \chi_\lambda(y) .$$
On the other hand, writing $x_r=\{e^{ir},1,1,\ldots,1\}$, an easy calculation shows that
$$ \int_0^{2\pi} |1-e^{ir}| q^r \frac{\chi_\lambda(x_r)}{d_\lambda} dr 
= (n-1)! (1-(-1)^{n-1}q^{2\pi}) \prod_j (-i\mu_j-\log q)^{-1}$$
and so, by (\ref{efun}),
$$m_q \chi_\lambda = \tilde c_q^{-1}(1-(-1)^{n-1}q^{2\pi}) \prod_j (-i\mu_j-\log q)^{-1} \chi_\lambda.$$
Since $\{\chi_\lambda, \lambda\in\Omega_n\}$ is a basis for $L_2(C_n,\mu)$, this implies (\ref{mi}).
\end{proof}

Analogous results to those presented in this section
can be obtained for the discrete version of this model,
which is equivalent to considering a certain family of Gibbs measures on 
rhombic tilings of the cylinder.  For more details, see~\cite{am}. 
The couplings defined in Section 3 are quite useful in this setting, where the
group-theoretic considerations of Section 2 no longer apply.
For example, they can be used to prove that the discrete analogues
of the interlacing operators $\{\I_q,q>0\}$ commute with each other.
In this setting, the symmetric functions 
$$(q_1,\ldots,q_k)\mapsto\I_{q_1}\cdots\I_{q_k}(y,z)$$
are essentially the cylindrical skew Schur functions discussed in
the papers~\cite{mcn01,pos05}. 

Finally, we remark that, in the case $q=1$, the probability measure $\alpha_2$ defined 
by~(\ref{alpha}) also arises naturally in random matrix theory.  
The probability measures on $C_n$ given by
$\mu(dx)=(2\pi)^{-n}\Delta(x)^2dx$ and $A_n^{-1}\Delta(x)dx$, where $A_n$ is a normalisation 
constant, are known, respectively, as the {\em circular unitary ensemble} and {\em circular orthogonal 
ensemble}. It is a classical result, which was conjectured by Dyson~\cite{dys62a} and subsequently 
proved by Gunson~\cite{gun62}, that the set of alternate eigenvalues from a superposition of two 
independent draws from the circular orthogonal ensemble, are distributed according to the circular 
unitary ensemble. Moreover, the joint law of the `even' and `odd' eigenvalues has probability
density on $C_n\times C_n$ proportional to $\Delta(y)\Delta(z)\I_1(y,z)$, which is the same as
$\alpha_2$, the joint distribution at two consecutive times of the stationary Markov chain with 
transition kernel $m_q$ (defined by (\ref{mdef})) in the case q=1.

\bigskip

\noindent {\em Acknowledgements.}
Research of the first two authors supported in part by Science Foundation Ireland
Grant No. SFI04/RP1/I512. Thanks are due to Dominique Bakry, 
Manon Defosseux and Persi Diaconis for helpful discussions,
and also to the anonymous referee and Associate Editor for helpful
comments and suggestions which have led to an improved version of the paper.

\end{document}